\renewcommand{\theequation}{\arabic{section}.\arabic{equation}}
\newtheorem{theorem}{Theorem}[section]
\newtheorem{lemma}[theorem]{Lemma}
\newtheorem{proposition}[theorem]{Proposition}
\newtheorem{remark}[theorem]{Remark}
\newtheorem*{definition}{Definition}
\newcommand{\ZFp}{\mathbb{F}_p}
\newcommand{\cA}{{\mathcal A}}
\newcommand{\E}{\mathbb E}
\newcommand{\F}{\mathbb F}
\title[Polynomial Corners]{A Polynomial Roth Theorem for Corners in Finite Fields}
\author[Han]{Rui Han}
\address{Department of Mathematics, Louisiana State University, Baton Rouge LA 70803}
\email{rhan@lsu.edu}
\thanks{RH is supported in part by the National Science Foundation grant DMS-2053285}
\author[Lacey]{Michael T. Lacey}   
\address{School of Mathematics, Georgia Institute of Technology, Atlanta GA 30332, USA}
\email {lacey@math.gatech.edt}
\thanks{MTL: The author is a 2020 Simons Fellow. Research supported in part by grant  from the  National Science Foundation, DMS-1949206}
\author[Yang]{Fan Yang} 
\email{ffyangmath@gmail.com}
\thanks{FY is supported in part by AMS-Simons Travel grant 2019-2021.}
\begin{document}

\date{\today}

\begin{abstract} 
We prove a Roth type theorem for polynomial corners in the finite field setting.  
Let $ \phi _1$ and $ \phi _2$ be two polynomials of distinct degree.  
For sufficiently large primes $ p$, any subset $ A \subset \mathbb F _p \times \mathbb F _p$ with $ \lvert  A\rvert > p ^{2 - \frac1{16}} $ 
contains three points $ (x_1, x_2) ,  (x_1 + \phi _1 (y), x_2),  (x_1, x_2 + \phi _2 (y))$.  
The study of these questions on $ \mathbb F_p$ was started by Bourgain and Chang.  Our Theorem adapts the argument of 
Dong, Li  and Sawin, in particular relying upon deep Weil type inequalities established by N. Katz.  
\end{abstract}

\maketitle

\section{Introduction}

We establish a Roth type theorem for a pair of linearly independent polynomials in the finite field setting. 
But not in $ \mathbb F _p$, rather $ \mathbb F _p \times \mathbb F _p$, with the polynomials acting in different coordinates.  
This we refer to as the corner setting.  

\begin{theorem} \label{thm:Roth}  Let $ p$ be an odd prime.  
Let $\phi_1,\phi_2$ be two linearly independent  polynomials on $\F_p$, degrees not divisible by $ p$, with   $\phi_1(0)=\phi_2(0)=0$. 
Moreover, require both to be quadratic, or have distinct degrees.  
Then any $A\subset \F_p^2 $ with $  \lvert  A \rvert \geq C p ^{2 - \frac{1} {16}} $ contains at least 
 $ C ^{-1}  p^{3 - 3/16} $ triples of the form 
 \begin{equation*}
(x_1,x_2),\;  (x_1+\phi_1(y),x_2) ,\; (x_1, x_2+\phi_2(y)) 
\end{equation*}
 for $ y\in \mathbb F _p$.   The constant $ C = C _{\phi _1, \phi _2}$ is independent of $ p$.  
\end{theorem}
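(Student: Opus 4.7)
The plan is to express the count of corners as a trilinear form on $\F_p^2$,
\[
\Lambda(f_1, f_2, f_3) := \sum_{x_1, x_2, y \in \F_p} f_1(x_1, x_2)\, f_2(x_1 + \phi_1(y), x_2)\, f_3(x_1, x_2 + \phi_2(y)),
\]
and to compare $\Lambda(\mathbf{1}_A, \mathbf{1}_A, \mathbf{1}_A)$ with its expected main term $\alpha^3 p^3$, where $\alpha := |A|/p^2 \geq C p^{-1/16}$. Writing $\mathbf{1}_A = \alpha + g$ with $g$ mean zero on $\F_p^2$ and $\|g\|_2^2 \leq \alpha p^2$, we expand $\Lambda$ trilinearly into eight summands. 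The pure constant term equals $\alpha^3 p^3 \gtrsim C^{-3} p^{3 - 3/16}$, which by itself produces the claimed number of triples; hence the task reduces to proving a generalized von Neumann estimate for the seven remaining summands, each of which contains at least one copy of $g$.

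The natural framework is Fourier analysis on $\F_p^2$. Inserting the Fourier expansion of each factor and evaluating the $(x_1, x_2)$-sums produces two linear constraints on the frequencies and reduces the expression to a sum over four parameters whose kernel is the exponential sum
\[
S(a, b) := \sum_{y \in \F_p} e_p\bigl(a \phi_1(y) + b \phi_2(y)\bigr).
\]
The single point $(a,b)=(0,0)$, where $S=p$, contributes only through terms that are killed by the mean-zero property of $g$ together with Plancherel. For $(a,b) \neq (0,0)$, the hypothesis on $\phi_1, \phi_2$ (linearly independent, of distinct degrees or both quadratic, with degrees not divisible by $p$) guarantees that $a\phi_1 + b\phi_2$ is non-constant of degree coprime to $p$, so classical Weil yields the uniform estimate $|S(a,b)| \lesssim \sqrt{p}$.

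Substituting this square-root cancellation into the Fourier expansion and applying Cauchy--Schwarz with Plancherel $\sum_\xi |\widehat{F_i}(\xi)|^2 = p^2 \|F_i\|_2^2 \leq \alpha p^4$ gives a naive error bound of order $\alpha^{3/2} p^{5/2}$, which beats the main term only when $\alpha \gg p^{-1/3}$. To reach the advertised threshold $\alpha \gg p^{-1/16}$ one must iterate: Cauchy--Schwarz in the $y$-variable replaces $\phi_i(y)$ by the differenced polynomial $\phi_i(y+h) - \phi_i(y)$, and a further Cauchy--Schwarz in an auxiliary variable turns the exponential sum into a multi-parameter complete sum over an algebraic variety. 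Ordinary Weil no longer suffices here; following the strategy of Dong--Li--Sawin, one invokes Katz's deep Weil-type bounds for families of exponential sums to recover square-root cancellation outside a sparse set of ``bad'' frequencies.

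The main obstacle is the control of those degenerate frequency configurations. The corner setting is favorable: because $\phi_1$ and $\phi_2$ act on the disjoint coordinates $x_1, x_2$, the differenced exponential sums split across the two directions and each factor is governed by an independent one-variable Katz estimate, in contrast to the coupled situation of the one-dimensional polynomial Roth problem of Bourgain--Chang. The crux is therefore to carry out this coordinate decoupling systematically across all seven error terms, to verify that the exceptional frequency set where Katz's bound degenerates has density $\ll p^{-c}$ for some $c > 1/16$, and to book-keep the losses from the iterated Cauchy--Schwarz so they do not overwhelm the gain $\alpha^{1/16}$ provided by the density hypothesis.
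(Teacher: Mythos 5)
Your reduction breaks at the very first step: the decomposition $\mathbf 1_A=\alpha+g$ with $g$ of mean zero on $\F_p^2$ is not the right structured/pseudorandom splitting for corners, and the claimed ``generalized von Neumann estimate for the seven remaining summands'' is false for several of them. The point is that in this pattern each of $\phi_1,\phi_2$ shifts only one coordinate, so after expanding in Fourier the degenerate frequencies are not just $(0,0)$ but the whole set where $m_1=0$ (for the factor shifted in $x_1$) or $n_2=0$ (for the factor shifted in $x_2$); on these the exponential sum equals $p$ and there is no cancellation. Mean zero of $g$ over $\F_p^2$ does not kill these contributions, because the Fourier coefficients $\hat g(0,m_2)$ and $\hat g(n_1,0)$ --- i.e.\ the components of $g$ that are functions of $x_2$ alone or of $x_1$ alone --- need not vanish. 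Concretely, if $A$ is a union of $\alpha p$ full rows then $\Lambda(g,g,\alpha)=\alpha^2(1-\alpha)p^3$, comparable to your main term; and the degenerate part of $\Lambda(g,g,g)$ is essentially $p^3\,\E_x\bigl(g\cdot\E_{x_1}g\cdot\E_{x_2}g\bigr)$, which is neither small nor sign-definite. So no amount of Weil/Katz input rescues the scheme as you have set it up: the obstruction is structural, not a matter of bookkeeping the exponential sums.

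The paper handles this by choosing the structured part to be the product of marginal averages rather than the constant: the smoothing inequality (Theorem \ref{thm:main}) bounds $\lVert\mathcal A(f,f)-\E_{x_1}f\cdot\E_{x_2}f\rVert_2\lesssim p^{-1/8}\lVert f\rVert_4^2$, i.e.\ the subtracted term retains exactly the row- and column-functions your $g$ fails to absorb. The main term is then $\E_x\bigl(f\,\E_{x_1}f\,\E_{x_2}f\bigr)$, which is no longer identically $\alpha^3$ but is lower-bounded by $(\E f)^3/8$ via an elementary double Cauchy--Schwarz argument (Lemma \ref{lem:E3}); combining with $\lVert f\rVert_2\lVert f\rVert_4^2=\delta$ for an indicator gives the count $\gtrsim\delta^3p^3$ once $\delta^2\gg p^{-1/8}$, which is where the exponent $\tfrac1{16}$ comes from. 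To repair your argument you would need to (i) replace $\alpha$ by the pair of marginal averages in the decomposition, (ii) supply the analogue of Lemma \ref{lem:E3} to lower-bound the resulting main term, and (iii) only then run the Weil/Katz analysis on the genuinely oscillatory part --- which is the content of the paper's Theorem \ref{thm:main}, whose proof your sketch also leaves at the level of strategy.
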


The existence of such triples follows from the multidimensional polynomial {S}zemer\'{e}di Theorem 
of Bergelson and Lieberman \cite{MR1325795}. 
But there are very few prior results in the literature with explicit bounds, and none that we are aware of that are effective. 
Shkredov \cite{MR2223244,MR2266965} addressed the case of the triples $(x_1,x_2),\;  (x_1+ y,x_2) ,\; (x_1, x_2+y) $.   
The best bounds known in this case are double logarithmic, even in the finite field case \cites{MR2532994,MR2289954}.  

There is a small literature on Euclidean analogs of these questions again in the corners setting. 
Shkredov's setting is addressed in \cite{MR3878595}. 
A recent result of Christ, Durcik and Roos \cite{2020arXiv200810140C} in the Euclidean setting, has a corollary that addresses triples of the form 
$(x_1,x_2),\;  (x_1+ y,x_2) ,\; (x_1, x_2+y ^2 ) $. 
A recent closely related paper of Chen, Guo and Li \cite{2020arXiv200813011C} gives a polynomial Roth theorem on $ \mathbb R $.  
One would not expect the Euclidean setting to give the quantitative bounds above.

Our approach follows in the line of investigation started by Bourgain and Chang \cite{MR3704938}, which started the  study of polynomial progressions in $ \mathbb F _p$. We build upon the subsequent work of Peluse \cite{MR3874848} and Dong, Li and Sawin \cite{2017arXiv170900080D}. 
At this point, there is a powerful, and developing, theory of longer progressions, that we will return to below.

The main point is to obtain `smoothing' estimates for averages of functions over polynomial varieties in $ \mathbb F _ p ^2 $. 
We work with functions $ f \;:\; \mathbb F _p ^2 \to \mathbb C $.  We compute expectations 
\begin{equation*}
\mathbb E _{x\in \mathbb F _p ^2 } f  =  \frac1{p ^2 } \sum_{x\in \mathbb F _p ^2 } f (x). 
\end{equation*}
We will also take expectations over $ \mathbb F _p$.  The averages we are interested in are 
\begin{equation} \label{defA} 
\mathcal A (f_1, f_2) := \mathbb E _{y\in \mathbb F _p}  f_1(x_1+\phi_1(y),x_2)f_2(x_1,x_2+\phi_2(y)) 
\end{equation}
 We write $ x= (x_1,x_2) \in \mathbb F _p ^2 $ throughout the paper.   
Above, we take the expectation over $ y\in \mathbb F _p$.   The norms of functions are given by 
\begin{equation*}
\lVert f\rVert_r = \bigl[ \mathbb E _{x} \lvert  f (x)\rvert ^{r}  \bigr] ^{1/r} . 
\end{equation*}

The main inequality compares  $ \mathcal A (f_1, f_2)$ 
to the average of $ f_1$ in the first coordinate, 
times the average of $ f_2$ in the second coordinate.   
These two are close in norm when $ p$ is large.  
That is, in a quantitative sense, the two polynomials act independently of each other.   

\begin{theorem}\label{thm:main}
Let $\phi_1,\phi_2$ be two polynomials on $\F_p$ with distinct degrees, satisfying $\phi_1(0)=\phi_2(0)=0$ . Then the averaging operator $\mathcal{A}$ satisfies
\begin{equation}
\|\mathcal{A}(f_1, f_2)-\E_{x_1}f_1 \cdot  \E_{x_2} f_2 \|_2\lesssim  p^{-1/8}\|f_1\|_4\|f_2\|_4,
\end{equation}
with the implied constant depending only on the degrees of $\phi_1$ and $\phi_2$.
\end{theorem}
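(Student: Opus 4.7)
First I would take the Fourier transform of the averaging operator. Writing $e_p(t) = e^{2\pi i t/p}$ and $\hat{f}(\xi,\eta) = \E_x f(x) e_p(-\xi x_1 - \eta x_2)$, a direct calculation gives
\begin{equation*}
\widehat{\mathcal{A}(f_1,f_2)}(\alpha,\beta) = \sum_{\xi,\eta \in \F_p} \hat{f}_1(\xi,\beta-\eta)\,\hat{f}_2(\alpha-\xi,\eta)\,S(\xi,\eta),
\end{equation*}
where the governing multiplier is the joint exponential sum
\begin{equation*}
S(\xi,\eta) := \E_{y \in \F_p} e_p\bigl(\xi \phi_1(y) + \eta \phi_2(y)\bigr).
\end{equation*}
The $(\xi,\eta)=(0,0)$ term is exactly the Fourier transform of $\E_{x_1} f_1 \cdot \E_{x_2} f_2$, so by Plancherel the theorem amounts to controlling the $\ell^2$-norm of $\widehat{\mathcal{A}(f_1,f_2)}$ restricted to $(\xi,\eta) \neq (0,0)$. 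Because $\phi_1, \phi_2$ have distinct degrees (neither divisible by $p$), the combination $\xi \phi_1 + \eta \phi_2$ is non-constant of degree $\max(\deg\phi_1,\deg\phi_2) \geq 2$ whenever $(\xi,\eta) \neq (0,0)$, and Weil's inequality yields $|S(\xi,\eta)| \lesssim p^{-1/2}$ there.

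Next I would decompose $f_i = h_i + g_i$, with $h_1(x_1,x_2) := \E_{x_1'} f_1(x_1',x_2)$ depending only on $x_2$, $g_1 := f_1 - h_1$ having mean zero in $x_1$, and $h_2, g_2$ defined symmetrically; these satisfy $\|g_i\|_4 \lesssim \|f_i\|_4$ and $\mathcal{A}(h_1,h_2) = \E_{x_1} f_1 \cdot \E_{x_2} f_2$. It therefore remains to bound the three terms $\mathcal{A}(g_1,g_2) + \mathcal{A}(g_1,h_2) + \mathcal{A}(h_1,g_2)$. The mixed terms factor through a 1D smoothing operator: for instance, $\mathcal{A}(g_1,h_2)(x) = (T_1 g_1)(x)\, h_2(x_1)$, where $T_1 g(x_1,x_2) := \E_y g(x_1+\phi_1(y),x_2)$ has Fourier symbol $S(\xi,0)$. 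On functions with mean zero in $x_1$ I would establish the $L^4$ smoothing bound
\begin{equation*}
\|T_1 g\|_4 \lesssim p^{-1/4} \|g\|_4
\end{equation*}
by writing $\|T_1 g\|_4^4 = \|F * F\|_{\ell^2}^2$ with $F(\xi) = S(\xi,0)\hat{g}(\xi)$, then invoking Young's convolution inequality together with $\|F\|_{\ell^2} \lesssim p^{-1/2}\|g\|_2$ (from Weil) and $\|F\|_{\ell^1} \lesssim \|g\|_2$ (from Weil plus Cauchy-Schwarz on the $p$ nonzero frequencies). By H\"older, $\|\mathcal{A}(g_1,h_2)\|_2 \leq \|T_1 g_1\|_4 \|h_2\|_4 \lesssim p^{-1/4}\|f_1\|_4 \|f_2\|_4$, and $\mathcal{A}(h_1,g_2)$ is treated symmetrically.

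The main obstacle is the doubly mean-zero term $\mathcal{A}(g_1,g_2)$, for which I must prove $\|\mathcal{A}(g_1,g_2)\|_2 \lesssim p^{-1/8} \|g_1\|_4 \|g_2\|_4$. The naive Cauchy-Schwarz in $y$ yields $|\mathcal{A}(g_1,g_2)|^2 \leq T_1|g_1|^2 \cdot T_2|g_2|^2$, losing all cancellation. Following the strategy of Dong, Li, and Sawin, I would instead iterate Cauchy-Schwarz twice: first in one spatial variable, reducing $\|\mathcal{A}(g_1,g_2)\|_2^2$ to an average over $(y_1,y_2) \in \F_p^2$ of autocorrelations of $g_1, g_2$ along 1D slices; then again in $(y_1,y_2)$, expressing $\|\mathcal{A}(g_1,g_2)\|_2^4$ as an average over $(y_1,\ldots,y_4) \in \F_p^4$ that, after Fourier expansion, reduces to the quadrilinear exponential sum
\begin{equation*}
\E_{y_1,\ldots,y_4} e_p\bigl(\xi\,\Delta\phi_1(\bar{y}) + \eta\,\Delta\phi_2(\bar{y})\bigr), \qquad \Delta\phi_i(\bar{y}) := \phi_i(y_1) - \phi_i(y_2) - \phi_i(y_3) + \phi_i(y_4).
\end{equation*}
Katz's deep refinement of Weil provides the required $p^{-1/2}$ gain on this sum off the trivial diagonal solutions (where for instance $\{y_1,y_4\} = \{y_2,y_3\}$); unwinding through the two Cauchy-Schwarzes then yields $\|\mathcal{A}(g_1,g_2)\|_2^4 \lesssim p^{-1/2} \|g_1\|_4^4 \|g_2\|_4^4$, hence the claimed $p^{-1/8}$ smoothing. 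The hardest step is verifying the geometric hypotheses of Katz's theorem for the variety $\{\Delta\phi_1(\bar{y}) = \Delta\phi_2(\bar{y}) = 0\} \subset \mathbb{A}^4_{\F_p}$: that it is a local complete intersection of the expected dimension with the required irreducibility and smoothness properties. The distinct-degrees assumption on $\phi_1,\phi_2$ is precisely what guarantees that the two defining equations remain geometrically independent under the specializations arising in the Cauchy-Schwarz iteration, so that Katz's cohomological estimate applies.
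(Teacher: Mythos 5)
Your proposal follows the same route as the paper: Fourier-expand $\mathcal A$, observe that the zero frequency of the multiplier $K(n_1,m_2)=\E_y e_p(n_1\phi_1(y)+m_2\phi_2(y))$ produces the product term $\E_{x_1}f_1\cdot\E_{x_2}f_2$, kill the ``mixed'' frequencies with Weil, and reduce the genuinely bilinear piece by repeated Cauchy--Schwarz to a singular exponential sum handled by Katz. Your organization is slightly different and in places cleaner: the paper splits the frequency sum into $J_1,J_2,J_3$ according to whether $n_1$ and $m_2$ vanish, whereas you decompose $f_i=g_i+h_i$ in physical space; your treatment of the mixed terms via the $L^4$-improving bound $\|T_1g\|_4\lesssim p^{-1/4}\|g\|_4$ (Young's inequality on $F*F$ with $F=S\hat g$) is a legitimate alternative to the paper's direct $\ell^2$ computation in Lemma~\ref{lem:J2}, and it lands on the same $p^{-1/4}$.

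The gap is in the key step. The quadrilinear sum you write, $\E_{\bar y}\,e_p(\xi\,\Delta\phi_1(\bar y)+\eta\,\Delta\phi_2(\bar y))$ with a single pair $(\xi,\eta)$, is not the sum that arises, and if it were, the theorem would be easy: that sum factors as $|K(\xi,\eta)|^2|K(-\xi,-\eta)|^2\lesssim p^{-2}$ by Weil alone, and no input from Katz would be needed. What actually comes out of the two Cauchy--Schwarz steps (see \eqref{e:GH}) is a sum over the zero locus $\{G=0\}$ of $G(\bar y)=\Delta\phi_1(\bar y)$ of $e_p(H(\bar y))$, where the four copies of $\phi_2(y_i)$ in $H$ carry \emph{four distinct} coefficients $m_2,\,m_2+h_2,\,m_2',\,m_2'+h_2$ (and $\phi_1(y_2),\phi_1(y_4)$ carry the shift $h_1$). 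Because these coefficients are independent, the sum does not factor, and one must verify the hypotheses of Katz's theorem (smoothness of the leading hypersurface of $G$, and of the codimension-two intersection of the leading forms of $G$ and $H$) as functions of $(m_2,m_2')$. These hypotheses \emph{fail} on a set of pairs $(m_2,m_2')$ --- the ``generalized diagonal'' $D_h$ of Lemma~\ref{lem:K4} --- and a separate argument (the trivial Weil bound $p^{-1}$ on $D_h$, plus the fact that $D_h$ has $O(p)$ elements) is needed there. Your phrase ``off the trivial diagonal solutions where $\{y_1,y_4\}=\{y_2,y_3\}$'' places the exceptional set in the $y$-variables, but the relevant degeneracy lives in the modulation parameters $(m_2,m_2')$; this is exactly where the distinct-degree hypothesis enters (via equation \eqref{equation for d1 less than d2 case}), and it is the part of the argument your sketch does not supply.
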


\bigskip 

Bourgain and Chang \cite{MR3704938} started the study of polynomial progressions on $ \mathbb F _p$.  
Peluse \cite{MR3874848} and Dong, Li and Sawin \cite{2017arXiv170900080D} extended the work in the setting of progressions of length three, 
as in this paper.   
For longer progressions,  Peluse \cite{MR3934588} established a finite field version of the polynomial {S}zemer\'{e}di Theorem.  
Building on this,  Peluse and Prendiville \cites{2020arXiv200304122P,2019arXiv190302592P,2020arXiv200304121P}  have established logarithmic type bounds for polynomial progressions in $ \mathbb Z$.  
This paper is the first to indicate that similar types of results could be true in the corners setting.

Our argument adapts the argument of Dong, Li and Sawin \cite{2017arXiv170900080D}.  The technique analyzes 
the kernel of the averaging operator in Fourier variables.  Standard considerations lead to expressions that look like Gowers norms of 
the kernel in Fourier variables. The latter are then somewhat complicated exponential sums. 
In the quadratic case, those can be controlled by Gauss sums, and a result of Bombieri \cite{MR200267} on exponential sums with rational arguments. In the general case, one uses the Weil estimates, and deep results of N.~Katz \cite{MR1715519} for `singular' sums of exponentials.

There are some important differences. One should note that our main `smoothing' inequality is weaker than 
\cite{2017arXiv170900080D}, in that it is not an $ L ^{p}$-improving estimate.  
A second important difference is that in the general case, we require different degrees.  
In the one variable setting, this is can be assumed without loss of generality due to a change of variables argument 
\cite{2017arXiv170900080D}*{(3.1)}. This does not seem to be available in the corners setting. 
Nevertheless, in the quadratic case, one can assume linear independence of the two polynomials, and replace the inequalities of 
Katz by those of Bomberi \cite{MR200267} for sums of exponentials along rational functions.

\section{Notation} 

 For a prime $p$,  denote $e_p(x):=e^{2\pi i\frac{x}{p}}$.  
The Fourier transform is defined to be 
\begin{equation*}
\hat{f}(z)=\frac{1}{p^2}\sum_{x\in \ZFp^2} f(x)e_p(-x\cdot z)  = \mathbb E_x f (x) e_p(-x\cdot z). 
\end{equation*}
Many familiar inequalities continue to hold with this notation. 
In particular, Parseval's identity states that 
\begin{equation*}
\lVert f\rVert_2 ^2 = \lVert \hat f \rVert _{\ell ^2 } = \Bigl[ \sum_{z\in \mathbb F _p ^2 } \lvert  \hat f (z) \rvert ^2   \Bigr] ^{1/2} . 
\end{equation*}
 Here, we use the notation $ \lVert  \cdot \rVert_2 $ to denote the norm of $ \mathbb F _p ^2 $, normalized counting measure. 
 And $ \lVert \cdot \rVert _{\ell ^2 }  $ to denote the usual $ \ell ^2 $ norm.  This is done throughout.  
 And, we have Fourier inversion 
 \begin{equation*}
 f (x) = \sum_{z\in \mathbb F _p ^2 } \hat f (z) e_p (x \cdot z). 
\end{equation*}

Many familiar inequalities continue to hold, and we cite them below. One of them is 
for a function $ \phi $ on $ \mathbb F_p$, with Fourier transform $ \widehat \phi (n)= \mathbb E _{x\in \mathbb F_p} \phi (x) e_p (-nx) $, we have 
\begin{equation}\label{e:4}
\lVert \phi\rVert_4 ^{4} = \sum_{\substack{n_1, n_2 ,n_3, n_4 \in \mathbb F _p\\  n_1 - n_2 - n_3 +n_4=0}} 
\widehat \phi (n_1) \overline  {\widehat \phi  (n_2)}\,   \overline  {\widehat \phi (n_3) } \widehat \phi (n_4).  
\end{equation}

\section{Proof of Theorem \ref{thm:Roth}}
We need the following elementary lemma.
\begin{lemma}\label{lem:E3}
Let $f$ be a  function on $\F_p^2$ with $ 0\leq f \leq 1$ . Then, 
\begin{align*}
\E_x ( f\,\E_{x_1}f\,\E_{x_2}f) \gtrsim  (\mathbb E f) ^{3}
\end{align*}
\end{lemma}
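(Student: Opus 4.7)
The plan is a popularity (thresholding) argument. Write $\alpha := \E f$, and introduce the marginal averages
\[
h(x_1) := \E_{x_2} f(x_1, x_2), \qquad g(x_2) := \E_{x_1} f(x_1, x_2),
\]
so that the quantity to bound from below is $T := \E_x\!\left[f(x_1,x_2)\, h(x_1)\, g(x_2)\right]$. Note $\E h = \E g = \alpha$, and, since $0 \leq f \leq 1$, also $0 \leq h, g \leq 1$ pointwise.

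Fix a threshold constant $c \in (0, 1/2)$ (say $c = 1/4$) and isolate the ``popular'' rows and columns:
\[
S_1 := \{x_1 \in \F_p : h(x_1) > c\alpha\}, \qquad S_2 := \{x_2 \in \F_p : g(x_2) > c\alpha\}.
\]
Since $h \leq c\alpha$ off $S_1$,
\[
\E_x\!\left[f(x)\, \mathbf{1}_{x_1 \notin S_1}\right] \;=\; \E_{x_1}\!\left[h(x_1)\, \mathbf{1}_{x_1 \notin S_1}\right] \;\leq\; c\alpha,
\]
and symmetrically $\E_x\!\left[f \cdot \mathbf{1}_{x_2 \notin S_2}\right] \leq c\alpha$. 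A union bound over the complement $(S_1 \times S_2)^c = (S_1^c \times \F_p) \cup (\F_p \times S_2^c)$ then yields
\[
\E_x\!\left[f \cdot \mathbf{1}_{S_1 \times S_2}\right] \;\geq\; \alpha - 2c\alpha \;=\; (1-2c)\alpha.
\]

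On the set $S_1 \times S_2$ the pointwise lower bound $h(x_1)g(x_2) > c^2 \alpha^2$ holds by construction, so
\[
T \;\geq\; \E_x\!\left[f \cdot h(x_1)\, g(x_2) \cdot \mathbf{1}_{S_1 \times S_2}\right] \;\geq\; c^2 \alpha^2 \cdot \E_x\!\left[f \cdot \mathbf{1}_{S_1 \times S_2}\right] \;\geq\; c^2(1-2c)\, \alpha^3.
\]
Taking $c = 1/4$ gives the explicit bound $T \geq \alpha^3/32$, proving the lemma. There is no substantive obstacle; the only design choice is any threshold $c \in (0, 1/2)$, which ensures the union bound leaves a positive share of the mass on $S_1 \times S_2$ where both marginals are simultaneously bounded below by a constant multiple of $\alpha$.
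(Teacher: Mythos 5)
Your proof is correct. It is in the same spirit as the paper's (a popularity/thresholding argument), but the mechanics differ: the paper thresholds only the marginal $\E_{x_2}f$, defining a single popular set $A=\{x_1:\E_{x_2}f\ge\delta/2\}$, and then handles the remaining bilinear expression $\E_{x_2}\bigl(\E_{x_1}\mathbf 1_A f\bigr)^2$ by Cauchy--Schwarz, arriving at the constant $\tfrac18$. You instead threshold both marginals, use a union bound to show that at least $(1-2c)\alpha$ of the mass of $f$ sits on the product of the two popular sets, and then conclude by a purely pointwise lower bound $hg>c^2\alpha^2$ there --- no Cauchy--Schwarz at all. Your route is more symmetric in the two coordinates and slightly more elementary; the price is a marginally worse constant ($\tfrac1{32}$ with $c=\tfrac14$, or $\tfrac1{27}$ at the optimal $c=\tfrac13$, versus the paper's $\tfrac18$), which is irrelevant for the lemma as stated. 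Every step you wrote checks out: the identity $\E_x[f\,\mathbf 1_{x_1\notin S_1}]=\E_{x_1}[h\,\mathbf 1_{x_1\notin S_1}]$, the union bound over $(S_1\times S_2)^c$, and the final pointwise estimate are all valid.
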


\begin{proof}
Let $ \delta = \mathbb E_x f $.  
The function $ g (x_1) =\mathbb E _{x_2} f (x_1, x_2)$ takes values from $ 0 $ to $ 1$, and has $ \mathbb E_{x_1} g (x_1) = \delta $. 
In particular,  letting $ A = \{x_1:\, g \geq \delta /2\}$, we have 
\begin{align*}
\mathbb E_{x_1}  (\mathbf 1_{A }  g) (x_1) \geq \delta/2.
\end{align*}
Indeed, if this were false, we would have $ \mathbb E_{x_1} g(x_1) < \delta $.  
Now, estimate as below, where we insert $ \mathbf 1_{A}$ and use Cauchy--Schwarz.  
\begin{align*}
\E_x ( f\,\E_{x_1}f\,\E_{x_2}f)  &  =  \mathbb E _{\substack{x_1, x_2\\ x_1', x_2'}} 
f (x_1, x_2) f (x_1', x_2) f (x_1, x_2') 
\\
& \geq 
\mathbb E _{\substack{x_1, x_2, x_1'}} 
f (x_1, x_2) f (x_1', x_2)   \cdot  \mathbb E _{x_2'}\mathbf 1_{A} (x_1)  f (x_1, x_2') 
\\
& \geq \tfrac{\delta }2  \mathbb E _{\substack{x_1, x_2, x_1'}}  \mathbf 1_{A} (x_1)
f (x_1, x_2) f (x_1', x_2) 
\\
& \geq \tfrac{\delta }2  \mathbb E _{\substack{x_1, x_2, x_1'}}  \mathbf 1_{A} (x_1) \mathbf 1_{A} (x_1')
f (x_1, x_2) f (x_1', x_2)  
\\
&= \tfrac{\delta }2  \mathbb E _{\substack{  x_2 }}   \bigl(\mathbb E _{x_1} \mathbf 1_{A} (x_1) f (x_1, x_2) ) ^2 
\\
& \geq \tfrac{\delta }2  \bigl( \mathbb E _{\substack{  x_2 }}   \mathbb E _{x_1} \mathbf 1_{A} f ) ^2 \geq \frac{\delta^3 } 8. 
\end{align*}

\end{proof}

\begin{proof}[Proof of Theorem \ref{thm:Roth}] 
Let $f = \mathbf 1_{A}$, and set $ \delta = \mathbb E_x f = p ^{-2} \lvert  A\rvert  \geq  Cp ^{- \frac1{16}}$. 
 Theorem \ref{thm:main} implies that 
\begin{align*}
\E_x \E_y f(x) & f(x_1+\phi_1(y),x_2)   f(x_1,x_2+\phi_2(y))\\
&=\E_x  f  \cdot \mathcal A (f,f)
\\
&\geq \frac{ \delta^3}{8}- Cp^{-\frac{1}{8}}\|f\|_2\|f\|_4^2\\
&= \frac{\delta^3}{8}- C p^{-\frac{1}{8}}\delta . 
\end{align*}
But under the conditions of the Theorem,  the last bound is at least $ C ^{-1} \delta ^{3} \gtrsim   p ^{- \frac3{16}} $.    
The polynomial triples in $ A$ are indexed by $ x=(x_1, x_2) \in \mathbb F _p ^2 $ and $ y\in \mathbb F _p$, so that the 
conclusion follows.  
\end{proof}

\section{Proof of Theorem \ref{thm:main}} \label{section: proof}
Expand  $f_1$ and $f_2$ in Fourier variables 
\begin{align}
\cA (f_1, f_2)(x) 
& =  \sum_{m,n }  \hat f_1 (n) \hat f_2 (m)  e_p ( (m+n)x ) \mathbb E _y  e _{p} (n_1\phi _1 (y) + m_2\phi _2 (y))
\\ & = \sum_{m,n }  \hat f_1 (n) \hat f_2 (m)  e_p ( (m+n)x ) K (n_1,m_2) , 
\\
\textup{where} \quad K (n_1,m_2) & =  \mathbb E _y  e _{p} (n_1\phi _1 (y) + m_2\phi _2 (y)) .  
\end{align}
The kernel $ K (n_1,m_2)$ given above plays the decisive role.  
Let us observe that linear independence of $ \phi _1$ and $ \phi _2$ together with the fundamental work of Weil \cite{MR4242}, 
imply that 
\begin{equation}\label{e:Weil}
\lvert  K (n_1, m_2 )\rvert  = \lvert  \mathbb E _y  e _{p} (n_1\phi _1 (y) + m_2\phi _2 (y)) \rvert  \lesssim  \frac1{ \sqrt{p}    }, \qquad  (n_1, m_2) \neq 0. 
\end{equation}

The sum over $ m,n \in \mathbb F _p ^2 $ is divided into the collections 
\begin{gather*}
 \mathcal J_1 = \{  (n,m) \;:\;   n_1 = m_2 =0\} , 
 \\
\mathcal J_2 = \{  (n,m) \;:\;  n_1 \neq 0,\ m_2=0\}, 
 \\
\mathcal J_3 = \mathbb F_p \times \mathbb F _p \setminus  (\mathcal J_1 \cup \mathcal J_2) . 
\end{gather*}
This gives this three sums. 
\begin{align}
J_1 &=\sum_{n_2}\sum_{m_1}\hat{f}_1(0,n_2)\hat{f}_2(m_1,0)e_p(m_1x_1+n_2x_2)
\\ \label{e:J1}
&=\mathbb{E}_{x_1} f_1 \cdot \mathbb{E}_{x_2} f_2, 
\\  \\ 
J_2&=\sum_{n_1\neq 0}\sum_{n_2}\sum_{m_1}\hat{f}_1(n) \hat{f}_2(m_1,0) K(n_1,0) e_p(n\cdot x+m_1x_1)
\\    \label{e:J2}
&=\sum_{n_2}\sum_{m_1}\sum_{n_1\neq 0} \hat{f}_1(n)\hat{f}_2(m_1-n_1,0) K(n_1,0) e_p(m_1x_1+n_2x_2) , 
\\  \\ 
J_3&=\sum_{n} \sum_{m_1}\sum_{m_2\neq 0} \hat{f}_1(n) \hat{f}_2(m) K(n_1,m_2) e_p((n+m)\cdot x)
\\
&=\sum_{n}\sum_{m_1}\sum_{m_2\neq 0}\hat{f}_1(n-m)\hat{f}_2(m) K(n_1-m_1,m_2) e_p(n\cdot x)
\\  \label{e:J3}
&=\sum_{n}\sum_{m} \hat{f}_1(n-m)\hat{f}_2(m) \tilde{K}(n_1-m_1,m_2) e_p(n\cdot x), 
\end{align} 
where in the last line we modify the definition of $ K $ from \eqref{e:Weil} to 
\begin{align*}
\tilde{K}(m):=
\begin{cases}
K(m_1, m_2),  &  \text{if } m_2\neq 0\\
0  &  \text{if } m_2=0
\end{cases}
\end{align*}

The term $ J_1$ in \eqref{e:J1} is the one we subtract off.  We  estimate the remaining two terms.  
The estimate for $ J_2$ is the straight forward one.   It is addressed in Lemma~\ref{lem:J2}, while the 
sophisticated term  $ J_3$ in \eqref{e:J3} is addressed in Lemma~\ref{lem:J3}. 
From these two Lemmas, we have 
\begin{align}
\lVert \cA_\Gamma(f_1, f_2) - 
\mathbb{E}_{x_1} f_1 \cdot \mathbb{E}_{x_2} f_2 \rVert_2  & \leq \lVert  J_2\rVert_2 + \lVert  J_3\rVert_2 
\\
& \lesssim p ^{- \frac{1}8} \lVert f_1\rVert_4 \lVert f_2\rVert_4. 
\end{align}
That is the conclusion of the  Theorem.   And we turn to the Lemmas.

\begin{lemma}\label{lem:J2}
For $J_2$, we have the following estimate.
\begin{align*}
\|J_2\|_2  \lesssim  p^{-\frac{1}{4}} \|f_1\|_4 \|f_2\|_4.
\end{align*}
\end{lemma}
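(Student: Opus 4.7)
The plan is to apply Parseval directly to the expression \eqref{e:J2} for $J_2$. Since $J_2$ is written as a Fourier series in the frequencies $(m_1,n_2)$, one has
\begin{equation*}
\|J_2\|_2^2 = \sum_{m_1, n_2} \Bigl| \sum_{n_1 \neq 0} \hat{f}_1(n_1, n_2)\, \hat{f}_2(m_1 - n_1, 0)\, K(n_1, 0) \Bigr|^2.
\end{equation*}
Expanding the square as a double sum indexed by $(n_1, n_1')$ and carrying out the $m_1$-summation first produces the autocorrelation
\begin{equation*}
G(j) := \sum_{k \in \F_p} \hat{f}_2(k, 0)\, \overline{\hat{f}_2(k + j, 0)},\qquad j \in \F_p,
\end{equation*}
of the slice $k \mapsto \hat{f}_2(k,0)$, evaluated at $j=n_1-n_1'$.

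The key analytic input is an $\ell^2$ bound on $G$. Set $h(x_1) := \E_{x_2} f_2(x_1, x_2)$, so that $\hat{h}(k) = \hat{f}_2(k, 0)$. A short Fourier calculation gives $\sum_j G(j)\, e_p(-jy) = |h(y)|^2$, exhibiting $G$ as (a reflection of) the Fourier transform of $|h|^2$ on $\F_p$. Hence Parseval yields
\begin{equation*}
\sum_j |G(j)|^2 = \|h\|_4^4 \leq \|f_2\|_4^4,
\end{equation*}
the last step being Jensen's inequality. In particular $\|G\|_{\ell^2} \leq \|f_2\|_4^2$.

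Combining these pieces, the Weil bound \eqref{e:Weil} applied to the nontrivial polynomial $n_1 \phi_1$ gives $|K(n_1, 0)| \lesssim p^{-1/2}$ for $n_1 \neq 0$, which extracts a factor of $p^{-1}$. A first Cauchy--Schwarz in $n_1'$ yields
\begin{equation*}
\sum_{n_1'} |\hat{f}_1(n_1', n_2)|\, |G(n_1-n_1')| \leq \|\hat{f}_1(\cdot,n_2)\|_{\ell^2}\, \|G\|_{\ell^2},
\end{equation*}
and a second Cauchy--Schwarz, bounding $\sum_{n_1} |\hat{f}_1(n_1,n_2)| \leq p^{1/2} \|\hat{f}_1(\cdot,n_2)\|_{\ell^2}$, costs a factor of $p^{1/2}$. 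Summing $\|\hat{f}_1(\cdot,n_2)\|_{\ell^2}^2$ in $n_2$ gives $\|f_1\|_2^2 \leq \|f_1\|_4^2$ by Parseval, so altogether $\|J_2\|_2^2 \lesssim p^{-1/2} \|f_1\|_4^2 \|f_2\|_4^2$, which is the claim after taking square roots.

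There is no substantive obstacle; the only delicate point is that a naive strategy of pulling out $\|K\|_{\ell^\infty}$ and applying Cauchy--Schwarz in $n_1$ produces only the trivial bound $\|J_2\|_2 \lesssim \|f_1\|_2 \|f_2\|_2$. The improvement to $p^{-1/4}$ comes from exploiting cancellation in the $m_1$-sum, which manifests as the $\ell^2$-decay of the autocorrelation $G$ and allows the second Cauchy--Schwarz to cost only $p^{1/2}$ rather than $p$.
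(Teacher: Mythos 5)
Your proof is correct and follows essentially the same route as the paper: Parseval in $(m_1,n_2)$, squaring out the sum, the identification of the $\ell^2$ norm of the autocorrelation of $\hat f_2(\cdot,0)$ with $\lVert \mathbb E_{x_2} f_2\rVert_4^4$, the Weil bound for $K(n_1,0)$ contributing $p^{-1}$, and a Cauchy--Schwarz in the $n_1$ variable costing $p^{1/2}$. The only (harmless) difference is the arrangement of the final Cauchy--Schwarz steps, which leaves you with $\lVert f_1\rVert_2^2 \le \lVert f_1\rVert_4^2$ in place of the paper's fourth-moment expression \eqref{e:J22} for $f_1$.
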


\begin{proof}
The condition that $ n_1 \neq 0 $ means that Weil's inequality \eqref{e:Weil} holds.  
From the line \eqref{e:J2}, apply  Parseval's identity in the variables $ m_1$ and $ n_2$.   Then square out the norm. 
\begin{align*}
\|J_2\|_2^2&=
\Bigl\lVert \sum_{n_1\neq 0} \hat{f}_1(n)\hat{f}_2(m_1-n_1,0) K(n_1,0)\Bigr\rVert _{\ell^2_{m_1,n_2}}^2 
\\
&= \sum_{n_2 }\Bigl\lVert \sum_{n_1\neq 0} \hat{f}_1(n)\hat{f}_2(m_1-n_1,0) K(n_1,0)\Bigr\rVert _{\ell^2_{m_1}}^2 
\\
&= \sum_{ \substack{ m_1, n_2 \\ n_1, n_1'\neq 0}}  \hat{f}_1(n_1, n_2)\hat{f}_2(m_1-n_1,0) K(n_1,0)
\overline { \hat{f}_1(n_1',n_2)\hat{f}_2(m_1-n_1',0) K(n_1',0)}
\\
&= \sum_{ \substack{ m_1, n_2 \\ n_1 \neq 0,\  u\neq -n_1}}  \hat{f}_1(n_1, n_2)\hat{f}_2(m_1-n_1,0) K(n_1,0)
 \\ & \qquad  \qquad \times \overline { \hat{f}_1(n_1+u,n_2)\hat{f}_2(m_1-n_1-u,0) K(n_1+u,0)}
\end{align*}
Having squared out the $ \ell ^{2}$ norm, we set $ n_1' = n_1 +u$. 
 The last line is rewritten using the  notation 
\begin{equation}\label{e:Delta}
\Delta _{ u} \phi (n) = \phi (n) \overline  {\phi  (n +u) }, \qquad n,u\in \mathbb F _p ^2 . 
\end{equation}
We have 
\begin{align}
\|J_2\|_2^2  & = \sum_{m_1}\
\sum_{ \substack{n \;:\; n_1 \neq 0}}\ \sum_{ u  \;:\;  u\neq -n_1}  
\Delta _{(u,0)}  \hat{f}_1(n_1, n_2) \Delta _{(-u,0)} \hat{f}_2(m_1-n_1,0)   \Delta _{(u,0)}K(n_1,0) 
\\
\noalign{\noindent above, we can change variables, sending $m_1 $ to $ m_1 + n_1$,}
& = 
\sum_{ \substack{n \;:\; n_1 \neq 0}}\ \sum_{ u  \;:\;  u\neq -n_1}  \ \sum_{m_1}
\Delta _{(u,0)}  \hat{f}_1(n_1, n_2) \Delta _{(-u,0)} \hat{f}_2(m_1,0)   \Delta _{(u,0)}K(n_1,0) 
\\  \label{e:TwoNorms}
&\leq 
\Bigl\lVert   \sum_{m_1} \Delta _{(-u,0)} \hat{f}_2(m_1,0)    \Bigr\rVert _{\ell ^2_u} 
\cdot 
\Bigl\lVert 
\sum_{ \substack{n \;:\; n_1 \neq 0,  -u }}  
\Delta _{(u,0)}  \hat{f}_1(n_1, n_2)    \Delta _{(u,0)}K(n_1,0) 
\Bigr\rVert _{\ell ^{2}_u}. 
\end{align}

We estimate the two norm in \eqref{e:TwoNorms}. On the one hand, squaring out the norm below and appealing to \eqref{e:4}, we have 
\begin{align}
\Bigl\lVert   \sum_{m_1} \Delta _{(-u,0)} \hat{f}_2(m_1,0)    \Bigr\rVert _{\ell ^2_u}  ^2 
& = 
\sum_{u}  \sum_{m_1 , m_1'}
 \hat{f}_2(m_1,0)  \overline  { \hat{f}_2(m_1 -u,0) }  \,  \overline { \hat{f}_2(m_1',0) }      { \hat{f}_2(m_1 '-u,0) }   
 \\  \label{e:44}
 & = \lVert \mathbb E _{x_2} f_2 \rVert _{4} ^{4}  \leq \lVert f\rVert _{4} ^{4}.  
\end{align}

For the second norm in \eqref{e:TwoNorms}, the summing condition on $ n_1 \neq 0, -u$ means that the Weil estimate \eqref{e:Weil} holds, 
giving us $ \lvert  \Delta _{(u,0)}K(n_1,0)  \rvert \lesssim 1/p $. Thus, 
\begin{align}
\Bigl\lVert 
\sum_{ \substack{n \;:\; n_1 \neq 0,  -u }}  
\Delta _{(u,0)}  \hat{f}_1(n_1, n_2)    \Delta _{(u,0)}K(n_1,0) 
\Bigr\rVert _{\ell ^{2}_u} 
& \lesssim 
\frac{1}p 
\biggl\lVert  
\sum_{ \substack{ n_1 \;:\;n_1 \neq 0,  -u }}  
\biggl\lvert   
\sum_{n_2}
\Delta _{(u,0)}  \hat{f}_1(n_1, n_2)   \biggr\rvert 
\biggr\rVert _{\ell ^{2}_u}
\\  \label{e:J21}
& \lesssim \frac{1} {\sqrt{p}    } 
\Bigl\lVert 
\sum_{n_2}   \Delta _{(u,0)}  \hat{f}_1(n_1, n_2)     
\Bigr\rVert _{\ell ^{2}_{n_1,u}}
\end{align}
We continue with this last norm, squaring  it out. 
\begin{align}
\Bigl\lVert 
\sum_{n_2}   \Delta _{(u,0)}  \hat{f}_1(n_1, n_2)     
\Bigr\rVert _{\ell ^{2}_{n_1,u}}^2
 & = 
\sum_{ \substack{n_1, n_2\\ n_2' , u  }}  
  \Delta _{(u,0)}  \hat{f}_1(n_1, n_2)    \overline {  \Delta _{(u,0)}  \hat{f}_1(n_1, n_2')   }  
  \\
  & = 
  \sum_{ \substack{n_1, n_2\\ n_2' , u  }}  
   \hat{f}_1(n_1, n_2)   \overline {\hat{f}_1(n_1 +u, n_2)  } \,   \overline {  \hat{f}_1(n_1, n_2')   }   
    \hat{f}_1(n_1 +u , n_2') 
    \\ &= 
  \sum_{ \substack{n_1, n_2\\ n_1', n_2'}}  
   \hat{f}_1(n_1, n_2)   \overline {\hat{f}_1(n_1', n_2)  } \,   \overline {  \hat{f}_1(n_1, n_2')   }   
    \hat{f}_1(n_1' , n_2')  
    \\ &=   \label{e:J22}
    \mathbb E _{\substack{x_1, x_2\\ x_1', x_2' }} 
    f (x_1, x_2) f (x_1', x_2)     f (x_1, x_2') f (x_1', x_2') \leq \lVert f\rVert_4 ^{4}.  
\end{align}
Combining \eqref{e:44}, \eqref{e:J21} and \eqref{e:J22} completes the proof.  
\end{proof}

Now we turn to the more sophisticated  estimates of $J_3$.   The following Lemma with Lemma~\ref{lem:J2} completes the proof of Theorem~\ref{thm:main}.  

\begin{lemma}\label{lem:J3}
We have 
\begin{align*}
\|J_3\|_2 \lesssim  p^{-\frac{1}{8}}\|f_1\|_4 \|f_2\|_4.
\end{align*}
\end{lemma}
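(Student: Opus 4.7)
The plan is to follow the same Parseval--and--Cauchy--Schwarz strategy as in Lemma~\ref{lem:J2}, but with \emph{two} successive applications of Cauchy--Schwarz rather than one, since in $J_3$ both Fourier blocks $\hat f_1$ and $\hat f_2$ interact nontrivially with the kernel. Starting from \eqref{e:J3}, Parseval in $x$ gives
\begin{equation*}
\|J_3\|_2^2 = \sum_n \Bigl|\sum_m \hat f_1(n-m)\hat f_2(m)\tilde K(n_1-m_1, m_2)\Bigr|^2.
\end{equation*}
Expanding the square, setting $k := n-m$, and introducing the difference $u := m'-m$ puts $\|J_3\|_2^2$ in the form
\begin{equation*}
\sum_u \sum_{k,m} \Delta_{-u}\hat f_1(k)\cdot \Delta_u \hat f_2(m)\cdot \tilde K(k_1, m_2)\overline{\tilde K(k_1-u_1, m_2+u_2)},
\end{equation*}
in the notation of \eqref{e:Delta}. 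Since the kernel depends only on $(k_1, m_2)$, the sums over the free coordinates $k_2$ and $m_1$ are performed first, producing one-dimensional functions $\tilde F_1(k_1, u)$ and $\tilde F_2(m_2, u)$.

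I would then apply Cauchy--Schwarz in the block $(u, k_1)$. The first factor $\bigl(\sum_{u, k_1}|\tilde F_1(k_1, u)|^2\bigr)^{1/2}$ evaluates to at most $\|f_1\|_4^2$ via Parseval in both Fourier variables combined with the Cauchy--Schwarz inequality applied to $\E_{x_1}\lvert f_1\rvert^2$, in the spirit of the computation \eqref{e:J22}. The second factor is the $\ell^2_{u, k_1}$ norm of $\sum_{m_2}\tilde F_2(m_2, u) K_u(k_1, m_2)$, where $K_u(k_1, m_2) := \tilde K(k_1, m_2)\overline{\tilde K(k_1-u_1, m_2+u_2)}$. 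Squaring this out, introducing a second difference $v := m_2' - m_2$, and applying Cauchy--Schwarz a second time --- first in $(u, m_2)$ to peel off the $\tilde F_2$ block, then internally in $v$ --- extracts another $\|f_2\|_4^2$ factor. The problem then reduces to the ``kernel norm'' bound
\begin{equation*}
\sup_u \sum_{m_2, v} \Bigl|\sum_{k_1} K_u(k_1, m_2) \overline{K_u(k_1, m_2+v)}\Bigr|^2 \lesssim p^{-1},
\end{equation*}
which, if proved, would give $\|J_3\|_2^4 \lesssim p^{-1/2}\|f_1\|_4^4\|f_2\|_4^4$ and hence the lemma.

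The hard step is precisely this kernel bound --- a Gowers--$U^2$-type estimate on $\tilde K$. Writing each $\tilde K$ as $\E_y e_p(n_1\phi_1(y) + m_2\phi_2(y))$, interchanging sums, and executing the sum over $m_2$ produces a delta-function constraint $\phi_2(y_1)-\phi_2(y_2)-\phi_2(y_3)+\phi_2(y_4)=0$; a subsequent Plancherel manipulation in the remaining variable $(k_1, k_1')$ then rewrites the kernel norm as a multivariable exponential sum over the affine variety in $\F_p^4$ cut out by this constraint, with phase linear in $u$ and in the values $\phi_1(y_i), \phi_2(y_i)$. For generic $u$, Weil's inequality \eqref{e:Weil} yields the required $p^{-1/2}$ cancellation over the naive count, but verifying the non-degeneracy hypothesis and controlling the degenerate strata --- coincidences $y_i = y_j$, or loci where the phase becomes constant along a component of the variety --- is what requires the deep results of N.~Katz \cite{MR1715519} on singular exponential sums in the general case, and Bombieri's inequality \cite{MR200267} in the quadratic case. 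The distinct-degree hypothesis on $(\phi_1, \phi_2)$ enters here precisely to guarantee that the combined phase $n_1\phi_1 + m_2\phi_2$ remains non-degenerate in the sense required by these estimates, substituting for the change-of-variables reduction \cite{2017arXiv170900080D}*{(3.1)} that is available only in the one-variable setting.
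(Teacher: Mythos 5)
Your outline is, in all essentials, the paper's own argument: Parseval in $x$, differencing in $m$ with shift $u$ (the paper's $h$), pre-summing the free coordinates $k_2$ and $m_1$ to produce the one-dimensional blocks $\sum_{n_2}\Delta_{-h}\hat f_1$ and $\sum_{m_1}\Delta_h\hat f_2$, Cauchy--Schwarz to extract $\|f_1\|_4^2\|f_2\|_4^2$ via the identity behind \eqref{eq:J3_4}, and a final reduction to a four-fold correlation of the kernel $\sum_{k_1}(\Delta_u\tilde K)(k_1,m_2)\overline{(\Delta_u\tilde K)(k_1,m_2')}$, bounded by $p^{-3/2}$ off a generalized diagonal via Katz/Bombieri (the paper's Lemma~\ref{lem:J3sub} and Lemma~\ref{lem:K4}). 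Leaving that last estimate as a black box is structurally the same choice the paper makes, so I do not count it against you.

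There is one concrete error: your claimed kernel bound
\begin{equation*}
\sup_u \sum_{m_2, v} \Bigl|\sum_{k_1} K_u(k_1, m_2) \overline{K_u(k_1, m_2+v)}\Bigr|^2 \lesssim p^{-1}
\end{equation*}
is false at $u=0$. There $K_0(k_1,m_2)=|\tilde K(k_1,m_2)|^2\geq 0$, so the inner sum over $k_1$ consists of nonnegative terms with no possible cancellation; since the Weil bound is generically attained (e.g.\ $|\tilde K|^2=p^{-1}$ exactly on the support of the Gauss sum in the quadratic case), the inner sum is $\asymp p^{-1}$ for $\asymp p^2$ pairs $(m_2,v)$, and the left side is $\asymp 1$, not $O(p^{-1})$. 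The same degeneracy is why the paper's Lemma~\ref{lem:J3sub} is stated only for $h\neq 0$. The fix is exactly the paper's: split off the $u=0$ term before applying Cauchy--Schwarz and bound it by the pointwise Weil estimate, $I(0)=\sum_{n,m}|\hat f_1(n)|^2|\hat f_2(m)|^2|\tilde K(n_1,m_2)|^2\leq p^{-1}\|f_1\|_2^2\|f_2\|_2^2$, which is more than enough. With that one modification your power count goes through and yields $\|J_3\|_2^2\lesssim p^{-1}\|f_1\|_2^2\|f_2\|_2^2+p^{-1/4}\|f_1\|_4^2\|f_2\|_4^2$, as required.
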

\begin{proof}
From the equality \eqref{e:J3}, apply     Parseval's identity in the variable $ n \in \mathbb F _p ^2 $. 
\begin{align}
\|J_3\|_2^2
&=\Bigl\lVert 
\sum_m\hat{f}_1(n-m)\hat{f}_2(m) \tilde{K}(n_1-m_1,m_2) 
\Bigr\rVert_{\ell_n^2}^2   \notag 
\\
&=\sum_n\sum_{m,m'} \hat{f}_1(n-m)\overline{\hat{f}_1(n-m')}\hat{f}_2(m)\overline{\hat{f}_2(m')} 
\tilde{K}(n_1-m_1,m_2)\overline{\tilde{K}(n_1-m_1',m_2')}  \notag 
\\ 
\noalign{\noindent letting $m'=m+h$ and using the notation  $ \Delta _{h} \phi $ defined in  \eqref{e:Delta},  }
&=\sum_n\sum_{m,h}(\Delta_{-h}\hat{f}_1)(n-m)(\Delta_h\hat{f}_2)(m) 
\Delta_{(-h_1,h_2)}\tilde{K}(n_1-m_1,m_2)
\notag \\
&=\sum_{n,m,h}\Delta_{-h}\hat{f}_1(n) \Delta_h\hat{f}_2(m)
\Delta_{(-h_1,h_2)}\tilde{K}(n_1,m_2)
\label{eq:J3_1}
\\
&=\sum_{h} \sum_{n_1,m_2}\left(\sum_{n_2}(\Delta_{-h}\hat{f}_1)(n)\right) \left(\sum_{m_1}(\Delta_h\hat{f}_2)(m) \right)
(\Delta_{(-h_1,h_2)}\tilde{K})(n_1,m_2)
\\
& =:\sum_{h} I(h).\label{eq:J3_2}
\end{align}

Now we estimate each $I(h)$ in \eqref{eq:J3_2}.
When $h=0$, we have
\begin{align}\label{eq:J3_h0}
I(0)=\sum_{n,m} |\hat{f}_1(n)|^2 |\hat{f}_2(m)|^2 |\tilde{K}(n_1,m_2)|^2 \leq p^{-1} \|f_1\|_2^2 \|f_2\|_2^2.
\end{align}
In this case, the  Weil estimate \eqref{e:Weil} applies, since $ m_2 \neq 0$.

For $ h\neq 0$, we need  Lemma \ref{lem:J3sub}, which is the consequence of a deep extension of Weil's estimates due to Katz \cite{MR1715519}.
Lemma~\ref{lem:J3sub} implies that 
\begin{align*}
I(h)\lesssim p^{-\frac{1}{4}} \left\|\sum_{n_2}\Delta_{-h}\hat{f}_1(n)\right\|_{\ell_{n_1}^2} \left\|\sum_{m_1}\Delta_h\hat{f}_2(m) \right\|_{\ell_{m_2}^2}.
\end{align*}
Next, by Cauchy-Schwarz in $h$, we have
\begin{align}\label{eq:J3_hn0}
\sum_{h\neq 0} I(h)\lesssim p^{-\frac{1}{4}} \left\|\sum_{n_2}\Delta_{-h}\hat{f}_1(n)\right\|_{\ell_{n_1,h}^2} \left\|\sum_{m_1}\Delta_h\hat{f}_2(m) \right\|_{\ell_{m_2,h}^2}
\end{align}
Concerning the two terms on the right,  their estimates are the same by symmetry. For the first term, we have 
\begin{align}\label{eq:J3_4}
\Bigl\lVert 
\sum_{n_2}\Delta_{-h}\hat{f}(n) 
\Bigr\rVert_{\ell_{n_1,h}^2}\leq \|f\|_4^2.
\end{align} 
Indeed, the left side above, squared out is 
\begin{align*}
\Bigl\lVert 
\sum_{n_2}\Delta_{-h}\hat{f}(n) 
\Bigr\rVert_{\ell_{n_1,h}^2} ^2 
 &= \sum_{n_1,h,n_2,n_2'} \hat{f}(n_1,n_2)\overline{\hat{f}(n_1-h_1,n_2-h_2)}\overline{\hat{f}(n_1,n_2')}\hat{f}(n_1-h_1,n_2'-h_2)
 \\ 
  &= \sum_{n_1,n_1',n_2,n_2', n} \hat{f}(n_1,n_2)\overline{\hat{f}(n_1',n_2 - h_2)}\overline{\hat{f}(n_1,n_2')}\hat{f}(n_1',n_2' - h_2) 
  \\ &=\E_{x_2} (\E_{x_1}f^2(x_1,x_2))^2\leq \|f\|_4^4.
 \end{align*}
The last equality is easy to check directly.  

Plugging the estimates \eqref{eq:J3_h0} and \eqref{eq:J3_hn0} into \eqref{eq:J3_2}, we have
\begin{align}\label{eq:J3_3}
\|J_3\|_2^2
&  \lesssim p^{-1}\|f_1\|_2^2\|f_2\|_2^2+p^{-\frac{1}{4}}  \lVert f_1\rVert_4 ^2 \lVert f_2\rVert_4 ^2 . 
\end{align}

This proves Lemma \ref{lem:J3}. 
\end{proof}

\begin{lemma}\label{lem:J3sub}  For any $ F_1, F_2$ functions on $ \{0 ,\dotsc, p-1\}$ and $ h \neq 0 \in \mathbb F _p ^2 $, we have 
\begin{align*}
\sum_{n_1,m_2} F_1(n_1) F_2(m_2)(\Delta_{(h_1,h_2)}\tilde{K})(n_1,m_2)
\lesssim  p^{-\frac{1}{4}} \|F_1\|_{\ell_{n_1}^2} \|F_2\|_{\ell_{m_2}^2} .
\end{align*}
\end{lemma}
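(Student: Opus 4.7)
The plan is a $T T^*$ reduction followed by Schur's test, with the required off-diagonal pointwise bound on the $T^*T$ kernel coming from Weil/Katz estimates on a complete exponential sum over a variety in $\F_p^4$.

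First, I would apply Cauchy--Schwarz in $n_1$ to the bilinear sum. Writing $S$ for the left-hand side, this yields
\begin{equation*}
|S|^2 \le \|F_1\|_{\ell^2}^2 \sum_{m_2, m_2'} F_2(m_2)\overline{F_2(m_2')}\, L(m_2, m_2'),
\end{equation*}
where $L(m_2, m_2') := \sum_{n_1} \Delta_{(h_1,h_2)}\tilde K(n_1, m_2)\,\overline{\Delta_{(h_1,h_2)}\tilde K(n_1, m_2')}$. This $L$ is the kernel of $T^*T$, with $T$ the integral operator associated to $\Delta_{(h_1,h_2)}\tilde K$, and is therefore Hermitian positive semidefinite. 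Schur's test for Hermitian kernels reduces the desired $\|T\|_{\mathrm{op}} \lesssim p^{-1/4}$, equivalently $\|L\|_{\mathrm{op}} \lesssim p^{-1/2}$, to the row-sum estimate $\max_{m_2}\sum_{m_2'} |L(m_2, m_2')| \lesssim p^{-1/2}$.

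The diagonal contribution $m_2 = m_2'$ is immediate from \eqref{e:Weil}: two applications of $|\tilde K| \lesssim p^{-1/2}$ give $|\Delta \tilde K|^2 \lesssim p^{-2}$, so $|L(m_2, m_2)| \lesssim p \cdot p^{-2} = p^{-1}$, comfortably better than required. For the off-diagonal terms I would aim for the pointwise estimate $|L(m_2, m_2')| \lesssim p^{-3/2}$ when $m_2 \ne m_2'$, which summed over $p$ values of $m_2'$ produces the needed $p^{-1/2}$. To get at this, I would unfold each factor $\tilde K$ as $\mathbb E_y e_p(n_1\phi_1(y) + m_2\phi_2(y))$, reduce $L(m_2,m_2')$ to a fourfold $y$-average, and perform the inner complete sum over $n_1 \in \F_p$. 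That sum yields a Kronecker delta enforcing
\begin{equation*}
\phi_1(y_1) - \phi_1(y_2) - \phi_1(y_3) + \phi_1(y_4) \equiv 0 \pmod p,
\end{equation*}
and leaves the identity
\begin{equation*}
L(m_2, m_2') = \frac{1}{p^3} \sum_{(y_1,\dots,y_4) \in V} e_p\bigl(P(y_1,\dots,y_4)\bigr),
\end{equation*}
where $V \subset \F_p^4$ is the $3$-dimensional variety cut out by the equation above, and $P$ is an explicit polynomial phase built from $m_2$, $m_2'$, $h_1$, $h_2$ and $\phi_1, \phi_2$.

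The main obstacle is establishing the square-root cancellation $|\sum_V e_p(P)| \lesssim p^{3/2}$. The two obvious $2$-dimensional sub-strata $\{y_1 = y_2,\ y_3 = y_4\}$ and $\{y_1 = y_3,\ y_2 = y_4\}$ lie in $V$ automatically, and on each of them $P$ collapses to a one-variable exponential phase: on the first stratum $P$ reduces to $h_1(\phi_1(y_4) - \phi_1(y_2)) + h_2(\phi_2(y_4) - \phi_2(y_2))$, non-trivial because $h \ne 0$; on the second, $P$ reduces to $(m_2 - m_2')(\phi_2(y_1) - \phi_2(y_2))$, non-trivial because $m_2 \ne m_2'$, which is precisely why the off-diagonal estimate is strictly better than the diagonal one. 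A one-variable Weil bound controls each such stratum by $O(p)$, well inside $p^{3/2}$. On the generic part of $V$ the required square-root cancellation is the Deligne--Weil bound for exponential sums on varieties; the higher-codimension singular strata that may arise from algebraic coincidences between $\phi_1$ and $\phi_2$ are controlled by the singular-sum estimates of Katz \cite{MR1715519}. The distinct-degree (or quadratic with linear independence) hypothesis on $\phi_1, \phi_2$ enters at precisely this step, guaranteeing that $P$ stays non-degenerate on every singular stratum so that Katz's theorem is applicable.
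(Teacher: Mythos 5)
Your overall architecture ($TT^*$ via Cauchy--Schwarz in $n_1$, a Schur/row-sum test on the kernel $L(m_2,m_2')$, conversion of $L$ to a complete exponential sum over the variety $\{\phi_1(y_1)-\phi_1(y_2)-\phi_1(y_3)+\phi_1(y_4)=0\}$, and an appeal to Katz for square-root cancellation) matches the paper's proof. But there is a genuine gap in the off-diagonal step: you claim $|L(m_2,m_2')|\lesssim p^{-3/2}$ for \emph{every} pair with $m_2\neq m_2'$, and this is not what the Katz machinery delivers. The hypothesis of Katz's theorem that actually has to be verified is that the homogeneous leading forms of the cutting polynomial $G$ and the phase $H$ define a smooth codimension-$2$ projective variety, i.e.\ that a certain $2\times 4$ Jacobian has full rank on $\{G_{d_1}=H_{d_2}=0\}\setminus\{0\}$. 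This rank condition fails precisely on an algebraic set of pairs $(m_2,m_2')$ depending on $h$ --- in the distinct-degree case, the solution set of an equation of the shape $(1/m_2)^{e}-(1/(m_2+h_2))^{e}-(1/m_2')^{e}+(1/(m_2'+h_2))^{e}=0$ and its degenerate variants. That set contains the diagonal but is in general strictly larger: already for $e=1$ it factors as $(m_2-m_2')(m_2+m_2'+h_2)=0$, so the whole anti-diagonal $m_2+m_2'=-h_2$ consists of pairs with $m_2\neq m_2'$ for which your claimed $p^{-3/2}$ bound is unavailable and one only has the trivial Weil bound $|L|\lesssim p^{-1}$ from \eqref{e:Weil}. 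Your heuristic about the sub-strata $\{y_1=y_2,\,y_3=y_4\}$ and $\{y_1=y_3,\,y_2=y_4\}$ of $V$, where the phase stays nontrivial, does not substitute for this Jacobian verification and does not detect these extra bad pairs.

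The repair is exactly the paper's notion of a \emph{generalized diagonal}: the exceptional set $D_h$ has $O(1)$ entries in each row and column, so in your Schur row-sum the bad pairs contribute $O(1)\cdot p^{-1}$ and the good pairs contribute $p\cdot p^{-3/2}$, which still yields $\max_{m_2}\sum_{m_2'}|L(m_2,m_2')|\lesssim p^{-1/2}$ and hence the lemma. So your framework survives, but as written the pointwise claim it rests on is false in general; you need to enlarge the exceptional set from the diagonal to a generalized diagonal and check that the Jacobian degeneracy locus really is one (this is where the distinct-degree or quadratic hypothesis is used, and in the quadratic case with $a_1h_1=a_2h_2$ Katz must be replaced by an explicit Gauss-sum computation plus Bombieri's bound).
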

\begin{proof}[Proof of Lemma \ref{lem:J3sub}]
By duality, this is equivalent to 
\begin{align*}
\|\sum_{m_2} F_2(m_2)(\Delta_{(h_1,h_2)}\tilde{K})(n_1,m_2)\|_{\ell_{n_1}^2}^2\leq p^{-\frac{1}{2}} \|F_2(m_2)\|_{\ell_{m_2}^2}^2 .
\end{align*}
Expanding the left hand side, we have
\begin{align*}
\mathcal{B}:=\sum_{m_2,m_2'} F_2(m_2)\overline{F_2(m_2')} 
\sum_{n_1}(\Delta_{(h_1,h_2)}\tilde{K})(n_1,m_2)
\overline{(\Delta_{(h_1,h_2)}\tilde{K})(n_1,m_2')}.
\end{align*}
Here we use the following lemma.  To state it, we need this definition from  \cite{2017arXiv170900080D}. 

\begin{definition}
A set $D\subset \F_p^2$ is called a \emph{generalized diagonal} if for any $x\in \ZFp$, there are $O(1)$ $y$'s such that $(x,y)\in D$ and for any $y\in\ZFp$ there are $O(1)$ $x$'s such
that $(x,y)\in D$. The implied constant must be independent of $p$.
\end{definition}

\begin{lemma}\label{lem:K4}
Assume $\phi_1,\phi_2$ have distinct degrees, or both be quadratic.  
Then there exists a generalized diagonal set $D_h\in \F_p^2$ such that 
for $(m_2,m_2')\notin D_h$, we have
\begin{align}  \label{eq: final exp sum estimate}
\left|\sum_{n_1}(\Delta_{(h_1,h_2)}\tilde{K})(n_1,m_2)
\overline{(\Delta_{(h_1,h_2)}\tilde{K})(n_1,m_2')}\right|\lesssim p^{-\frac{3}{2}}.
\end{align}
\end{lemma}
\begin{remark}
This lemma is essentially from \cite{2017arXiv170900080D}*{Thm 3.1}, we include its proof in the appendix. 
This is the only place where we make the assumption that $\phi_1, \phi_2$ have distinct degrees, if the 
polynomials are not quadratic.  
Other parts of the proof only require  $\phi_1,\phi_2$ be linearly independent. 
In \cite{2017arXiv170900080D}, such a distinct degree assumption was not needed for the proof, as an 
early step of the argument \cite{2017arXiv170900080D}*{(3.1)}  allowed one to reduce to the case of distinct degrees.  
We return to this point in the appendix.  
\end{remark}
The lemma above gives us estimate when $(m_2,m_2')\notin D_h$. When $(m_2,m_2')\in D_h$, we use \eqref{e:Weil} which implies
\begin{align*}
\left|\sum_{n_1}(\Delta_{(h_1,h_2)}\tilde{K})(n_1,m_2)
\overline{(\Delta_{(h_1,h_2)}\tilde{K})(n_1,m_2')}\right|\lesssim p^{-1}.
\end{align*}
With these estimates in hand, we have
\begin{align*}
\mathcal{B}\lesssim \sum_{m_2,m_2'\in D_h} p^{-1}|F_2(m_2)||F_2(m_2')|+\sum_{m_2,m_2'\notin D_h}p^{-\frac{3}{2}}|F_2(m_2)||F_2(m_2')|\lesssim p^{-\frac{1}{2}}\|F_2\|_{\ell_{m_2}^2}^2,
\end{align*}
by Cauchy-Schwartz inequality. This proves Lemma \ref{lem:J3sub}.
\end{proof}

\appendix 
\setcounter{equation}{0}
\renewcommand{\theequation}{A.\arabic{equation}}

\section{Proof of Lemma \ref{lem:K4}}
Now we compute the following when $m_2\neq 0, m_2'\neq 0, m_2+h_2\neq 0, m_2'+h_2\neq 0$.

\begin{align}
\sum_{n_1} & (\Delta_{(-h_1,h_2)}\tilde{K})(n_1,m_2)
\overline{(\Delta_{(-h_1,h_2)}\tilde{K})(n_1,m_2')}\\
&=\sum_{n_1}\E_{y_1}\E_{y_2}\E_{y_3}\E_{y_4} e_p(n_1 G(y_1,y_2,y_3,y_4)+H(y_1,y_2,y_3,y_4))\\
&=\frac{1}{p^3}\sum_{\substack{y_1,y_2,y_3,y_4\\G(y_1,y_2,y_3,y_4)=0}}e_p(H(y_1,y_2,y_3,y_4)),   \label{e:GH}
\end{align}
where
\begin{align}
\begin{cases}
G(y_1,y_2,y_3,y_4):=\phi_1(y_1)-\phi_1(y_2)-\phi_1(y_3)+\phi_1(y_4)\\
H(y_1,y_2,y_3,y_4):=h_1(\phi_1(y_2)-\phi_1(y_4))+m_2(\phi_2(y_1)-\phi_2(y_2))\\
\qquad\qquad +m_2'(\phi_2(y_4)-\phi_2(y_3)) +h_2(\phi_2(y_4)-\phi_2(y_2))
\end{cases}
\end{align}
That is, in \eqref{e:GH}, we are summing  over points 
determined by the zero locus of  the polynomial $ G$,  with the exponential of values of the polynomial $ H$.

 Katz has generalized Deligne's theorem to exponential sums over smooth affine varieties \cite{MR617009}, and     singular algebraic varieties \cite{MR1715519}. We need the following special case of \cite{MR1715519}*{Theorem 4}.  (Here, we quote \cite{2017arXiv170900080D}: `The reader could skip its long proof and use it as a ``black box'' on an early reading of the paper.') 
\begin{theorem} \label{thm: Katz}
Let $G,H\in \ZFp[X_1,\dots,X_4]$. Assume that the degree of $H$ is indivisible by $p$, the homogeneous leading term of $G$ defines a smooth projective hypersurface, and the homogeneous leading terms of $G$ and that of $H$ together define a smooth co-dimension $2$ variety in the projective space. Then the following holds
\begin{equation*}
\sum_{\substack{y_1,y_2,y_3,y_4\\G(y_1,y_2,y_3,y_4)=0}}e_p(H(y_1,y_2,y_3,y_4))\lesssim p^{\frac{3}{2}}.
\end{equation*}

\end{theorem}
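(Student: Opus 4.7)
The plan is to prove the bound via the Grothendieck--Lefschetz trace formula combined with Deligne's purity theorem (Weil II), in the refined form developed by Katz for exponential sums on singular varieties. Set $V := \{G = 0\} \subset \mathbb{A}^4_{\F_p}$, a three-dimensional affine variety; let $\psi$ denote the additive character implicit in $e_p$, and let $\mathcal{F} := H^* \mathcal{L}_\psi$ be the pullback along $H$ of the associated Artin--Schreier sheaf, a lisse $\overline{\mathbb{Q}}_\ell$-sheaf on $V$ pure of weight zero. The trace formula writes the exponential sum as
\[
\sum_{i=0}^{6} (-1)^i \mathrm{Tr}\bigl(\mathrm{Frob}_p \mid H^i_c(V_{\overline{\F}_p}, \mathcal{F})\bigr),
\]
and the target estimate reduces to showing that every Frobenius eigenvalue on the right has absolute value at most $p^{3/2}$, together with a uniform bound $\sum_i \dim H^i_c = O(1)$ with implied constant depending only on $\deg G$ and $\deg H$. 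The latter is standard from Bombieri--Katz constructibility estimates, and I would simply cite these.

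For the weight bound, Deligne's theorem gives weights at most $i$ on $H^i_c$, which already yields the desired bound when $i \leq 3$; the essential work lies in the range $i \in \{4,5,6\}$. Compactify $V$ by homogenizing $G$ to obtain $\overline{V} \subset \mathbb{P}^4$, with hyperplane section at infinity $V_\infty \subset \mathbb{P}^3$ cut out by the leading form $G_d$ of $G$. The first smoothness hypothesis makes $V_\infty$ a smooth projective surface, so $\overline{V}$ has no singularities at infinity and any singular locus is confined to the affine chart. The second hypothesis, that the leading forms of $G$ and $H$ jointly define a smooth codimension-$2$ subvariety of $\mathbb{P}^3$, controls the indeterminacy of $H$ viewed as a rational function on $\overline{V}$: its polar divisor meets $V_\infty$ transversally in a smooth subvariety of the expected dimension. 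These two geometric facts are precisely the input needed to tame the ramification of $\mathcal{F}$ along the boundary divisor.

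The weight improvement is then extracted as follows. The hypothesis $p \nmid \deg H$ forces $\mathcal{F}$ to be geometrically nontrivial on each irreducible component of $V$, which kills the top cohomology $H^6_c$. For $H^5_c$ and $H^4_c$, one passes to the middle perverse extension $j_{!*}\mathcal{F}$ on $\overline{V}$ and invokes Deligne's purity theorem for intersection cohomology: the smoothness hypotheses above guarantee that $j_{!*}\mathcal{F}$ is a pure perverse sheaf of the expected weight, and that the comparison morphisms between $H^i_c(V, \mathcal{F})$ and the hypercohomology of $j_{!*}\mathcal{F}$ cost only correction terms supported on the smooth boundary stratum $V_\infty$. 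A Gysin / residue analysis on $V_\infty$, combined with Weil II applied to the boundary itself, then pushes the weights down to $\leq 3$ and yields the desired Frobenius eigenvalue bound $p^{3/2}$. The main obstacle is precisely this last boundary analysis: verifying that the two smoothness hypotheses really do suffice to put Weil II into the form Katz requires, rather than needing a strictly stronger transversality assumption. In practice, my proposal is to follow the route of \cite{MR1715519} step by step, treating the cohomological engine of his Theorem 4 as a black box and devoting the effort to the careful verification of its hypotheses in this four-variable setting.
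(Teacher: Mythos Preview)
The paper does not prove this theorem at all: it is stated as a direct special case of Katz's \cite{MR1715519}*{Theorem 4} and used as a black box, with the authors even quoting Dong--Li--Sawin's suggestion that the reader skip Katz's long proof. Your proposal is therefore not a competing proof but an expository sketch of the machinery underlying Katz's result, and as such it is broadly accurate: the trace formula reduces the bound to a weight estimate on $H^i_c(V,\mathcal{F})$, Deligne's Weil~II gives the generic bound $\leq i$, the top cohomology $H^6_c$ vanishes because the Artin--Schreier sheaf is geometrically nontrivial, and the remaining work is to push the weights on $H^4_c,H^5_c$ down to $\leq 3$ via a compactification and boundary analysis controlled precisely by the two smoothness hypotheses. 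Your final sentence---to treat Katz's Theorem~4 as a black box and verify its hypotheses---is exactly what the paper does, except that in this statement there is nothing to verify: the hypotheses of the theorem as stated \emph{are} already the hypotheses of Katz's result specialized to four variables. The actual verification work in the paper occurs not here but downstream, in the proof of Lemma~\ref{lem:K4}, where one checks that the specific $G,H$ arising from $\phi_1,\phi_2$ satisfy these smoothness conditions off a generalized diagonal. A minor quibble: your language about the ``indeterminacy of $H$ viewed as a rational function'' is slightly off, since a rational function has no indeterminacy locus; what you mean is the behavior of $H$ as a rational section of $\mathcal{O}(\deg H)$ along the divisor at infinity, and the transversality of $\{H_e=0\}\cap V_\infty$.
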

Now we are ready to prove \eqref{eq: final exp sum estimate}.  
We verify that the expression in \eqref{e:GH} satisfies the hypotheses of Theorem \ref{thm: Katz}.  
The first two conditions in the theorem are easy to check. We elaborate on the third condition, namely the `smooth co-dimension $2$ variety' 
condition.
It is split into  two cases separately: $d_1<d_2$ and $d_1=d_2$.

First assume $d_1<d_2$. Let $az^{d_1}$ and $bz^{d_2}$ denote the leading term of $\phi_1$ and $\phi_2$, resp. The homogeneous leading term of $G$ and $H$ are given below
\begin{gather*}
G_{d_1}(y_1,y_2,y_3,y_4):=ay_1^{d_1}-ay_2^{d_1}-ay_3^{d_1}+ay_4^{d_1},
\\
H_{d_2}(y_1,y_2,y_3,y_4):=bm_2 y_1^{d_2}-b(m_2+h_2)y_2^{d_2}-bm_2'y_3^{d_2}+b(m_2'+h_2)y_4^{d_2}. 
\end{gather*}

The Jacobian matrix is 
\begin{equation*}
J:=
\begin{bmatrix}
\nabla G_{d_1}\\ \nabla H_{d_2}
\end{bmatrix}=
\begin{bmatrix}
d_1ay_1^{d_1-1} & -d_1ay_2^{d_1-1} & -d_1ay_3^{d_1-1} & d_1ay_4^{d_1-1}\\
d_2bm_2y_1^{d_2-1} & -d_2b(m_2+h_2)y_2^{d_2-1} & -d_2bm_2'y_3^{d_2-1} & d_2b(m_2'+h_2)y_4^{d_2-1}
\end{bmatrix}
\end{equation*}
We need to show that it  has full rank, as a function of $ (y_1, y_2, y_3, y_4)$, at any point in $\{G_{d_1}=H_{d_2}=0\}\setminus \{0\}$, 
    provided $ (m_2, m_2')$ are not in $D_h$,  a generalized diagonal set in $ \mathbb F _p ^2 $.  

When $J$ has rank less than $2$, assuming $y_1y_2y_3y_4\neq 0$, we can solve for each $y_i$ and plug in $G_{d_1}=0$ to get the equation
\begin{equation} \label{equation for d1 less than d2 case}
\left(\frac{1}{m_2}\right)^{\frac{d_1}{d_2-d_1}}-\left(\frac{1}{m_2+h_2}\right)^{\frac{d_1}{d_2-d_1}}-\left(\frac{1}{m_2'}\right)^{\frac{d_1}{d_2-d_1}}+\left(\frac{1}{m_2'+h_2}\right)^{\frac{d_1}{d_2-d_1}}=0
\end{equation}
If one or two of the four variables $y_1,y_2,y_3,y_4$ are zero, then a variant equation can be obtained by deleting the corresponding term(s) in the above equation.  The term $ h_2$ is fixed.  The solutions  $ (m_2, m_2')$ to \eqref{equation for d1 less than d2 case} and its variants lie in a generalized diagonal set $ D_h$. So we can apply Theorem \ref{thm: Katz} for pairs $(m_2, m_2')$ outside this set.

\smallskip 
In our main theorems, we exclude the case of the polynomials having equal degree, unless the degree is two. 
We begin with the general case of equal degree.  
Consider the case $d_1=d_2=d$. The homogeneous leading term of $G$ and $H$ are
\begin{equation*}
\begin{split}
G_{d}(y_1,y_2,y_3,y_4):=ay_1^{d}-ay_2^{d}-ay_3^{d}+ay_4^{d},
\end{split}
\end{equation*}
and
\begin{equation*}
H_{d}(y_1,y_2,y_3,y_4):=byz_1^{d}-(b(m_2+h_2)-ah_1)y_2^{d}-bm_2'y_3^{d}+(b(m_2'+h_2)-ah_1)y_4^{d},
\end{equation*}
resp. The Jacobian matrix becomes
\begin{align*}
J & :=
\begin{bmatrix}
\nabla G_{d}\\ \nabla H_{d}
\end{bmatrix}
\\ & =
\begin{bmatrix}
day_1^{d-1} & -day_2^{d-1} & -day_3^{d-1} & day_4^{d-1}\\
dbm_2y_1^{d-1} & -d(b(m_2+h_2)-ah_1)y_2^{d-1} & -dbm_2'y_3^{d-1} & d(b(m_2'+h_2)-ah_1)y_4^{d-1}
\end{bmatrix}
\end{align*}
When $y_1y_2y_3y_4\neq 0$, $J$ has rank $1$ only when
\begin{equation} \label{equation for d1=d2}
bm_2=b(m_2+h_2)-ah_1=bm_2'=b(m_2'+h_2)-ah_1.
\end{equation}
One or two terms in the above equation can be dropped if the corresponding variable is zero.  
Make the additional assumption that 
\begin{align}\label{eq:assume}
ah_1\neq bh_2,
\end{align} 
It is then easy to see that the solutions to \eqref{equation for d1=d2} and its variants form a generalized diagonal set. So Theorem \ref{thm: Katz} applies in most cases, and we are done. 

\smallskip 

In general we don't know how to deal with the case when $ah_1=bh_2$. But when $d=2$, we can use Gauss sum to prove the desired result. 

Indeed, let $\phi_i(y)=a_iy^2+b_iy$, with $a_1, a_2\neq 0$ be linearly independent. 
Let $h$ be such that $a_1h_1=a_2h_2$, then we have $b_1h_1-b_2h_2\neq 0$.
Then we have the Gauss sum
\begin{align*}
K(n_1,m_2)=
\begin{cases}
p^{-\frac{1}{2}} e_p\left(\frac{b_1n_1+b_2m_2}{4(a_1n_1+a_2m_2)}\right)  &  a_1n_1+a_2m_2\neq 0,
\\
0,  &   a_1n_1+a_2m_2=0.
\end{cases}
\end{align*}
Next denoting 
\begin{align*}
\begin{cases}
\alpha:=a_1n_1+a_2m_2,\\
\alpha':=a_1n_1+a_2m_2'\\
\beta:=b_1n_1+b_2m_2\\
\beta':=b_1n_1+b_2m_2'\\
\gamma:=-b_1h_1+b_2h_2
\end{cases}
\end{align*}
We have
\begin{align*}
&\left|\sum_{n_1}(\Delta_{(-h_1,h_2)}\tilde{K}_{\Gamma})(n_1,m_2)
\overline{(\Delta_{(-h_1,h_2)}\tilde{K}_{\Gamma})(n_1,m_2')}\right|\\
=&\frac{1}{p^2} \left|\sum_{\substack{n_1\\ \alpha\neq 0\\ \alpha'\neq 0}} e_p\left(\frac{\beta}{4\alpha}-\frac{\beta+\gamma}{4\alpha}-\frac{\beta'}{4\alpha'}+\frac{\beta'+\gamma}{4\alpha'}\right)\right|\\
=&\frac{1}{p^2} \left|\sum_{\substack{n_1\\ \alpha\neq 0\\ \alpha'\neq 0}} e_p\left(\frac{a_2(m_2-m_2')(-b_1h_1+b_2h_2)}{4(a_1n_1+a_2m_2)(a_1n_1+a_2m_2')} \right)\right|\leq 3p^{-\frac{3}{2}},
\end{align*}
where we used the following estimate by Bombieri \cite{MR200267} as long as $m_2\neq m_2'$.
\begin{proposition}
Let $f_1, f_2\in \mathbb Z[X]$, $(f_1,f_2)=1$ and $\tilde{f}_1, \tilde{f}_2\in \F_p[X]$ the corresponding polynomials over $\F_p$, 
$\tilde{f}(x)=\frac{\tilde{f}_1(x)}{\tilde{f}_2(x)}$, where $x$ is to take only values with $p\nmid f_2(x)$. Define
$$S(\tilde{f})=\sum_{x} e_p(\tilde{f}(x)).$$
Then, assuming $\deg(\tilde{f})=\deg(\tilde{f}_1)+\deg(\tilde{f}_2)\geq 1$, we have
$$|S(\tilde{f})|\leq (n-2+\deg(\tilde{f})_{\infty}) p^{\frac{1}{2}}+1$$
with $n=$ the number of the poles and $(\tilde{f})_{\infty}$ the divisor of the poles of $\tilde{f}$ over the algebraic closure $\overline{\F}_p$ (including $\infty$ if necessary).
\end{proposition}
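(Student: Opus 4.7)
The plan is to interpret $S(\tilde f)$ as an error term in the point count of an Artin--Schreier cover of $\mathbb P^1_{\F_p}$ and then invoke Weil's Riemann hypothesis for curves. This is the route originally taken by Weil and refined by Bombieri.

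First, attach to $\tilde f$ the affine curve $C_{\tilde f}: y^p - y = \tilde f(x)$, viewed over the open set $U = \{x\in \mathbb A^1 : p\nmid f_2(x)\}$. This is an \'etale Galois $\F_p$-cover of $U$, with the Galois action $y\mapsto y+1$. On its smooth projective completion $\overline C_{\tilde f}$ over $\F_p$, orthogonality of additive characters gives
\begin{equation*}
\#\overline C_{\tilde f}(\F_p) \;=\; \#U(\F_p) \;+\; \sum_{c\in \F_p^\times}\ \sum_{x\in U(\F_p)} e_p\bigl(c\,\tilde f(x)\bigr) \;+\; R,
\end{equation*}
where the remainder $R = O(n + \deg(\tilde f)_\infty)$ accounts for the fiber of $\overline C_{\tilde f}$ over each pole of $\tilde f$ and over $x=\infty$. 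The inner sum with parameter $c$ is, by Galois conjugation of $S(\tilde f)$, equal in modulus to $|S(\tilde f)|$ for every $c\neq 0$, so bounding $\bigl|\#\overline C_{\tilde f}(\F_p) - (p+1)\bigr|$ yields a bound on $(p-1)|S(\tilde f)|$.

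Next, by Weil's bound for a smooth projective geometrically irreducible curve of genus $g$,
\begin{equation*}
\bigl|\#\overline C_{\tilde f}(\F_p) - (p+1)\bigr| \;\leq\; 2g\, p^{1/2}.
\end{equation*}
Geometric irreducibility of $\overline C_{\tilde f}$ follows from the coprimality $(f_1,f_2)=1$ together with $\deg(\tilde f)\geq 1$: these conditions rule out $\tilde f$ being of the form $h^p - h + c$ for a rational $h$, the only way the cover could split. The genus is then computed by Riemann--Hurwitz applied to the degree-$p$ cover $\overline C_{\tilde f}\to \mathbb P^1$. The cover is unramified over $\mathbb P^1$ away from the poles of $\tilde f$, and totally (wildly) ramified at each pole $P\in (\tilde f)_\infty$ with local different exponent $(p-1)(d_P+1)$, where $d_P$ is the Swan conductor exponent at $P$. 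Summation yields
\begin{equation*}
2g - 2 \;=\; -2p + (p-1)\!\!\sum_{P\in (\tilde f)_\infty}\!\!(d_P + 1) \;=\; -2p + (p-1)\bigl(n + \deg(\tilde f)_\infty\bigr).
\end{equation*}
Plugging the resulting $g$ into the Weil inequality, dividing by $p-1$ to isolate $|S(\tilde f)|$, and absorbing the $O(1)$ discrepancy from $R$ and from $U(\F_p)$ versus $\F_p$ into the additive $+1$, gives exactly Bombieri's bound $|S(\tilde f)| \leq (n - 2 + \deg(\tilde f)_\infty)\,p^{1/2} + 1$.

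The main obstacle is the genus computation at poles whose order is divisible by $p$: there the na\"ive Artin--Schreier equation is not in ``standard form'' and the pole order need \emph{not} equal the Swan conductor $d_P$. Before invoking Riemann--Hurwitz one must replace $\tilde f$ by $\tilde f - (h^p - h)$ for a rational $h\in \F_p(X)$ chosen so that the resulting function has pole order coprime to $p$ at $P$; such an $h$ exists by iteratively cancelling the leading term in the local Laurent expansion, and the substitution leaves $S(\tilde f)$ unchanged because $h(x)^p = h(x)$ for $x\in \F_p$ in the domain of $h$. Carrying out this normalization at every bad pole, and carefully tracking whether $x=\infty$ itself should count as one of the $n$ poles or be swept into the ``$-2$'' contribution of $\mathbb P^1$, is the delicate bookkeeping step; once this is done, both the leading constant $n-2+\deg(\tilde f)_\infty$ and the $+1$ in Bombieri's inequality fall out directly from the Weil/Riemann--Hurwitz computation outlined above.
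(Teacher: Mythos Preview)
The paper does not prove this Proposition; it is quoted from Bombieri and used as a black box in the appendix. So there is no paper-side argument to compare against, and your proposal must stand on its own.

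Your overall strategy---Artin--Schreier cover, Weil's Riemann hypothesis for curves, Riemann--Hurwitz for the genus---is the right one and is essentially Bombieri's. But there is a genuine gap at the step where you assert that $\sum_{x} e_p(c\tilde f(x))$ has the same absolute value as $S(\tilde f)$ for every $c\neq 0$ ``by Galois conjugation.'' This is false. The automorphism $\sigma_c\in\mathrm{Gal}(\mathbb{Q}(\zeta_p)/\mathbb{Q})$ sending $\zeta_p\mapsto \zeta_p^c$ does carry $S(\tilde f)$ to $S(c\tilde f)$, but $\sigma_c$ does not preserve the complex absolute value unless $c\equiv\pm 1$. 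A concrete counterexample: for $\tilde f(x)=x^3$ over $\mathbb F_7$ one computes $|S(\tilde f)|=|1+6\cos(2\pi/7)|\approx 4.74$ while $|S(2\tilde f)|=|1+6\cos(4\pi/7)|\approx 0.34$. Consequently the point count of the single curve $\overline C_{\tilde f}$ only controls the \emph{sum} $\sum_{c\ne 0}S(c\tilde f)$, and ``dividing by $p-1$'' yields no bound on the individual term $|S(\tilde f)|$.

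The repair is to decompose the zeta function of $\overline C_{\tilde f}$ according to characters of the Galois group $\mathbb F_p$: one has $Z(\overline C_{\tilde f},T)=Z(\mathbb P^1,T)\prod_{c\ne 0}L(\psi_c,T)$, where each factor $L(\psi_c,T)$ is a polynomial whose reciprocal roots (of absolute value $p^{1/2}$, by Weil) sum to $-S(c\tilde f)$ up to the boundary correction at the poles. Your Riemann--Hurwitz computation gives $2g=(p-1)(n-2+\deg(\tilde f)_\infty)$; the missing observation is that each $L(\psi_c,T)$ has the \emph{same} degree $2g/(p-1)=n-2+\deg(\tilde f)_\infty$, because the local Artin conductor at every pole of $\tilde f$ is independent of the choice of nontrivial $\psi_c$. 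Applying Weil's bound to this single factor then gives $|S(\tilde f)|\le (n-2+\deg(\tilde f)_\infty)p^{1/2}+1$ as desired. The remaining pieces of your sketch---geometric irreducibility, the normalization $\tilde f\mapsto \tilde f-(h^p-h)$ at poles of order divisible by $p$, and the bookkeeping for $\infty$---are handled correctly.
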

Note that in our case, $n=2$ and $\deg(\tilde{f})_{\infty}=2$. 
\qed


\begin{bibdiv}
\begin{biblist}

\bib{MR1325795}{article}{
      author={Bergelson, V.},
      author={Leibman, A.},
       title={Polynomial extensions of van der {W}aerden's and
  {S}zemer\'{e}di's theorems},
        date={1996},
        ISSN={0894-0347},
     journal={J. Amer. Math. Soc.},
      volume={9},
      number={3},
       pages={725\ndash 753},
         url={https://doi.org/10.1090/S0894-0347-96-00194-4},
      review={\MR{1325795}},
}

\bib{MR200267}{article}{
      author={Bombieri, Enrico},
       title={On exponential sums in finite fields},
        date={1966},
        ISSN={0002-9327},
     journal={Amer. J. Math.},
      volume={88},
       pages={71\ndash 105},
         url={https://doi.org/10.2307/2373048},
      review={\MR{200267}},
}

\bib{MR3704938}{article}{
      author={Bourgain, J.},
      author={Chang, M.-C.},
       title={Nonlinear {R}oth type theorems in finite fields},
        date={2017},
        ISSN={0021-2172},
     journal={Israel J. Math.},
      volume={221},
      number={2},
       pages={853\ndash 867},
         url={https://doi.org/10.1007/s11856-017-1577-9},
      review={\MR{3704938}},
}

\bib{2020arXiv200813011C}{article}{
      author={{Chen}, Xuezhi},
      author={{Guo}, Jingwei},
      author={{Li}, Xiaochun},
       title={{Two bipolynomial Roth theorems in $\mathbb{R}$}},
        date={2020-08},
     journal={arXiv e-prints},
       pages={arXiv:2008.13011},
      eprint={2008.13011},
}

\bib{2020arXiv200810140C}{article}{
      author={{Christ}, Michael},
      author={{Durcik}, Polona},
      author={{Roos}, Joris},
       title={{A triangular Hilbert transform with curvature}},
        date={2020-08},
     journal={arXiv e-prints},
       pages={arXiv:2008.10140},
      eprint={2008.10140},
}

\bib{2017arXiv170900080D}{article}{
      author={{Dong}, Dong},
      author={{Li}, Xiaochun},
      author={{Sawin}, Will},
       title={{Improved estimates for polynomial Roth type theorems in finite
  fields}},
        date={2017-08},
     journal={arXiv e-prints},
       pages={arXiv:1709.00080},
      eprint={1709.00080},
}

\bib{MR3878595}{article}{
      author={Durcik, Polona},
      author={Kova\v{c}, Vjekoslav},
      author={Rimani\'{c}, Luka},
       title={On side lengths of corners in positive density subsets of the
  {E}uclidean space},
        date={2018},
        ISSN={1073-7928},
     journal={Int. Math. Res. Not. IMRN},
      number={22},
       pages={6844\ndash 6869},
         url={https://doi.org/10.1093/imrn/rnx093},
      review={\MR{3878595}},
}

\bib{MR2532994}{incollection}{
      author={Green, Ben},
       title={Three topics in additive prime number theory},
        date={2009},
   booktitle={Current developments in mathematics, 2007},
   publisher={Int. Press, Somerville, MA},
       pages={1\ndash 41},
      review={\MR{2532994}},
}

\bib{MR617009}{book}{
      author={Katz, Nicholas~M.},
       title={Sommes exponentielles},
      series={Ast\'{e}risque},
   publisher={Soci\'{e}t\'{e} Math\'{e}matique de France, Paris},
        date={1980},
      volume={79},
        note={Course taught at the University of Paris, Orsay, Fall 1979, With
  a preface by Luc Illusie, Notes written by G\'{e}rard Laumon, With an English
  summary},
      review={\MR{617009}},
}

\bib{MR1715519}{article}{
      author={Katz, Nicholas~M.},
       title={Estimates for ``singular'' exponential sums},
        date={1999},
        ISSN={1073-7928},
     journal={Internat. Math. Res. Notices},
      number={16},
       pages={875\ndash 899},
         url={https://doi.org/10.1155/S1073792899000458},
      review={\MR{1715519}},
}

\bib{MR2289954}{article}{
      author={Lacey, Michael~T.},
      author={McClain, William},
       title={On an argument of {S}hkredov on two-dimensional corners},
        date={2007},
     journal={Online J. Anal. Comb.},
      number={2},
       pages={Art. 2, 21},
      review={\MR{2289954}},
}

\bib{MR3874848}{article}{
      author={Peluse, Sarah},
       title={Three-term polynomial progressions in subsets of finite fields},
        date={2018},
        ISSN={0021-2172},
     journal={Israel J. Math.},
      volume={228},
      number={1},
       pages={379\ndash 405},
         url={https://doi.org/10.1007/s11856-018-1768-z},
      review={\MR{3874848}},
}

\bib{MR3934588}{article}{
      author={Peluse, Sarah},
       title={On the polynomial {S}zemer\'{e}di theorem in finite fields},
        date={2019},
        ISSN={0012-7094},
     journal={Duke Math. J.},
      volume={168},
      number={5},
       pages={749\ndash 774},
         url={https://doi.org/10.1215/00127094-2018-0051},
      review={\MR{3934588}},
}

\bib{2019arXiv190302592P}{article}{
      author={{Peluse}, Sarah},
      author={{Prendiville}, Sean},
       title={{Quantitative bounds in the non-linear Roth theorem}},
        date={2019-03},
     journal={arXiv e-prints},
       pages={arXiv:1903.02592},
      eprint={1903.02592},
}

\bib{2020arXiv200304122P}{article}{
      author={{Peluse}, Sarah},
      author={{Prendiville}, Sean},
       title={{A polylogarithmic bound in the nonlinear Roth theorem}},
        date={2020-03},
     journal={arXiv e-prints},
       pages={arXiv:2003.04122},
      eprint={2003.04122},
}

\bib{2020arXiv200304121P}{article}{
      author={{Prendiville}, Sean},
       title={{The inverse theorem for the nonlinear Roth configuration: an
  exposition}},
        date={2020-03},
     journal={arXiv e-prints},
       pages={arXiv:2003.04121},
      eprint={2003.04121},
}

\bib{MR2266965}{article}{
      author={Shkredov, I.~D.},
       title={On a generalization of {S}zemer\'{e}di's theorem},
        date={2006},
        ISSN={0024-6115},
     journal={Proc. London Math. Soc. (3)},
      volume={93},
      number={3},
       pages={723\ndash 760},
         url={https://doi.org/10.1017/S0024611506015991},
      review={\MR{2266965}},
}

\bib{MR2223244}{article}{
      author={Shkredov, I.~D.},
       title={On a problem of {G}owers},
        date={2006},
        ISSN={1607-0046},
     journal={Izv. Ross. Akad. Nauk Ser. Mat.},
      volume={70},
      number={2},
       pages={179\ndash 221},
         url={https://doi.org/10.1070/IM2006v070n02ABEH002316},
      review={\MR{2223244}},
}

\bib{MR4242}{article}{
      author={Weil, Andr\'{e}},
       title={On the {R}iemann hypothesis in functionfields},
        date={1941},
        ISSN={0027-8424},
     journal={Proc. Nat. Acad. Sci. U.S.A.},
      volume={27},
       pages={345\ndash 347},
         url={https://doi.org/10.1073/pnas.27.7.345},
      review={\MR{4242}},
}

\end{biblist}
\end{bibdiv}

\end{document}